\documentclass{article}
\usepackage{graphicx} % Required for inserting images
\usepackage{mathtools}
\usepackage{amsmath}
\usepackage{amssymb}
\usepackage{amsfonts}
\usepackage{color}
\usepackage{lineno}
\usepackage{caption}
\usepackage {indentfirst}
\usepackage{graphicx}
\captionsetup[figure]{name={Fig.}}
\numberwithin{equation}{section}
\numberwithin{table}{subsection}
\usepackage{algorithm}
\usepackage{algorithmic,float}
\usepackage{multirow}
\usepackage{color}

\floatname{algorithm}{Algorithm}

\numberwithin{algorithm}{section}
\newtheorem{theorem}{Theorem}[section]
    % theorem 和 lemma 环境是 plain 样式
%\theoremstyle{definition}
 
%\newtheorem{proposition}[theorem]{\indent 命题}
%\newtheorem{corollary}[theorem]{\indent 推论}
\newtheorem{Definition}[theorem]{Definition}

\newenvironment{proof}{{\noindent\it Proof}\quad}{\hfill $\square$\par}

\usepackage{algorithmic,float}

\makeatletter

\linespread{1.2}
\usepackage{geometry}
\geometry{
	a4paper,
	total={170mm,257mm},
	left=20mm,
	top=20mm,
}
\usepackage[marginal]{footmisc}

\begin{document}

\title{Preprocessed GMRES for fast solution of linear equations\footnote{\noindent
		%\textbf{Funding}: The work was supported in part by Hunan Youth Science and Technology Innovation Talents Project (2021RC3110), the Key Project of Education Department of Hunan Province (21A0116), and National Key Research and Development Program (2023YFB3001604)
  }}
 \author{Juan Zhang\thanks{Corresponding author: zhangjuan@xtu.edu.cn}, Yiyi Luo\thanks{Email: l19118069216@163.com}\\Hunan Key Laboratory for Computation and Simulation in Science and Engineering, \\Key Laboratory of Intelligent Computing and Information Processing of Ministry of Education,\\School of Mathematics and Computational Science, Xiangtan University}
\date{}

\maketitle
{\small{\bf Abstract.}The article mainly introduces preprocessing algorithms for solving linear equation systems. This algorithm uses three algorithms as inner iterations, namely RPCG algorithm, ADI algorithm, and Kaczmarz algorithm. Then, it uses BA-GMRES as an outer iteration to solve the linear equation system. These three algorithms can indirectly generate preprocessing matrices, which are used for solving equation systems. In addition, we provide corresponding convergence analysis and numerical examples. Through numerical examples, we demonstrate the effectiveness and feasibility of these preprocessing methods. Furthermore, in the Kaczmarz algorithm, we introduce both constant step size and adaptive step size, and extend the parameter range of the Kaczmarz algorithm to 
$\alpha\in(0,\infty)$. We also study the solution rate of linear equation systems using different step sizes. Numerical examples show that both constant step size and adaptive step size have higher solution efficiency than the solving algorithm without preprocessing.

{\small{\bf Keywords.} RPCG method; ADI method; Kaczmarz method; BA-GMRES method; adaptive step size.
\section{Introduction}
		Consider the   linear system to be solved \begin{equation}\label{eq0}Ax=b,\quad b\in\mathcal{R}(A),\end{equation}
		where $A\in \mathbb{R}^{m\times n}$ may be overdetermined or underdetermined, not necessarily full rank, $b\in \mathbb{R}^m$ may not be in $\mathcal{R}(A)$, and $\mathcal{R}(A)$ is the range of $A$. The above problem is equivalent to the following least squares problem \begin{equation}\label{eq1}\min_{x\in \mathbb{R}^n}\left\|b-Ax\right\|_2.\end{equation}
		
In the case of large sparse matrices, direct methods for solving this equivalent least squares problem \eqref{eq1} are typically expensive. Mature iterative methods include (preconditioned) CGLS \cite{refP3} and LSQR \cite{refP4} methods, which are mathematically equivalent to the conjugate gradient (CG) method applied to \eqref{eq0} \cite{refP3}, as well as the LSMR method \cite{refP2}. Generally, problem \eqref{eq0} has infinitely many solutions. If 
$x \in \mathcal{R}(A)$, then problem \eqref{eq1} is equivalent to 
$\min_{r_k\in R^n}\left\|Bb-BAx\right\|_2$, leading to iterations 
$x_k$ and residuals 
$r_k=b-Ax_k$, progressively reducing the residual at each step, where 
$B\in\mathbb{R}^{n\times m}$.

		It is worth noting that for ill-conditioned problems, the convergence speed of the iterative method may be slow because the condition number of 
$A^TA$ is the square of 
$A$, making preconditioning necessary. Therefore, for solving linear systems, internal iteration can be applied in Krylov subspace methods instead of explicitly applying a preconditioning matrix. This technique is commonly known as the inner-outer iteration method, which enhances the solution rate. As a result, scholars have developed preconditioned GMRES methods: BA-GMRES and AB-GMRES. Currently, BA-GMRES has been widely used in practical problems in various fields such as computer vision, computational fluid dynamics, computer graphics, etc. In BA-GMRES, the choice of preconditioning matrix significantly impacts the convergence speed and iteration count. Typically, the preconditioning matrix should approximate the original problem, and several methods can be used to generate it: directly using the original problem's solution, employing block matrix decomposition, utilizing Algebraic Multigrid (AMG) techniques, or applying preconditioning methods like Incomplete LU factorization (ILU). These approaches aim to reduce the condition number of the original problem and enhance algorithm convergence speed. Current research by numerous scholars has yielded significant theoretical advancements in understanding the performance of BA-GMRES.

In recent years, various scholars have proposed and studied different methods to solve the least squares problem using Krylov subspace techniques. Hayami, Yin, and Ito introduced the left preconditioned generalized minimum residual (BA-GMRES) method by applying GMRES to the least squares problem in \cite{refP10}, which employs a preconditioning matrix 
One potential drawback of these methods is the computational complexity and memory requirements, especially for large-scale problems. Additionally, the choice of preconditioner and tuning parameters can impact the convergence and performance of these methods, requiring careful optimization for different problem instances.
$B\in \mathbb{R}^{n\times m}$. In \cite{refP16}, Yin et al. explored Krylov subspace methods with preconditioning to solve large-scale sparse least squares problems and found that their preconditioned GMRES method yields better convergence than preconditioned CGLS and LSQR methods, especially for ill-conditioned problems. Wei and Ning presented a Jacobian-free approach for nonlinear least squares problems in \cite{refP15}, which is not dependent on the sparsity or structural pattern of the Jacobi, gradient, or Hessen matrices and can be applied to dense Jacobi matrices. Hansen et al. discovered in \cite{refP14} that AB-GMRES and BA-GMRES show similar semi-convergence behaviors as LSMR/LSQR, and they are suitable for large-scale CT reconstruction problems with noisy data and mismatched projector pairs. Zhao and Huang proved in \cite{refP17} that preconditioned iterative Krylov subspace methods with contraction and balance preconditioners outperform CGLS for ill-conditioned least squares problems. Finally, Du et al. proposed in \cite{refP25} a method that utilizes Kaczmarz as the inner iteration and F-AB-GMRES as the outer iteration to solve the preconditioned least squares problem, which demonstrates good performance. The research discussed various Krylov subspace methods for solving least squares problems, including preconditioned GMRES, Jacobian-free approaches, and different preconditioning strategies to enhance convergence for ill-conditioned problems. These methods have shown effectiveness in large-scale CT reconstruction and have demonstrated improved convergence performance compared to traditional methods like CGLS and LSQR. 
One potential drawback of these methods is the computational complexity and memory requirements, especially for large-scale problems. Additionally, the choice of preconditioner and tuning parameters can impact the convergence and performance of these methods, requiring careful optimization for different problem instances.

		This article primarily focuses on preprocessing the original matrix to generate a new Krylov subspace for solving linear equations. This approach significantly reduces the complexity of the algorithm. The process of generating the preprocessing matrix involves internal iteration, typically employing the Normalized Errors Gauss-Seidel (NE-GS) and Normalized Errors Successive Over-Relaxation (NE-SOR) methods\cite{refP11}, commonly referred to as Kaczmarz or row projection methods\cite{refP12, refP13}. Since their introduction in the 1930s, these methods have undergone extensive theoretical development and practical application. In 2006 and 2009, Strohmer and Vershynin proposed the Randomized Kaczmarz method, which exhibits expected exponential convergence rates, reigniting research interest in the Kaczmarz method\cite{refP18, refP19}. Bai, Z.-Z. and Wu introduced a more effective probabilistic criterion in \cite{refP21}, constructing the Greedy Randomized Kaczmarz method. Additionally, Popa summarized the convergence rates of Kaczmarz-type methods in \cite{refP22}, including greedy Kaczmarz\cite{refP12}, and random Kaczmarz methods\cite{refP24}.

Furthermore, two other iterative methods can be used for preprocessing: the Restricted Preconditioned Conjugate Gradient (RPCG) method and the Alternating Direction Implicit (ADI) iteration method, both of which can generate a matrix related to the preprocessing matrix. RPCG has been widely used for solving large-scale sparse linear equation systems. It is a variant of the Conjugate Gradient (CG) method that accelerates convergence by introducing a preconditioning matrix, approximating the solution to the original problem, which can be generated using various methods such as Incomplete LU (ILU) decomposition. RPCG is particularly effective in solving highly asymmetric or indefinite matrix linear systems. In 2003, Bai, Z.-Z. proposed the RPCG method for solving large sparse linear equation systems and proved its effectiveness compared to some standard preconditioned Krylov subspace methods such as the Preconditioned Conjugate Gradient and Preconditioned MINRES methods\cite{refP20}. Additionally, the ADI method has been widely applied in scientific computing, including linear systems, partial differential equations (PDEs), and optimization. Since the 1950s, Peaceman and Rachford proposed an alternating direction iteration algorithm based on the Peaceman-Rachford Splitting (PRS) method for solving second-order elliptic equations\cite{refP26}. Subsequently, Douglas and Rachford proposed an effective method, now known as the Douglas-Rachford Splitting (DRS) method, for solving heat conduction problems\cite{refP27}. Many other ADI methods have since been proposed and applied to solve various partial differential equations\cite{refP31}.

This article aims to combine the Restricted Preconditioned Conjugate Gradient method (RPCG), Alternating Direction Iteration method (ADI), Kaczmarz method, and its variants with the BA-GMRES method to develop an approach for solving linear equation systems, with the goal of improving the solution rate. The first part mainly introduces the algorithm flow of using the RPCG method as the inner iteration and BA-GMRES as the outer iteration, and its convergence analysis. The second part mainly introduces the algorithm flow of using the ADI method as the inner iteration and BA-GMRES as the outer iteration, and its convergence analysis. The third part mainly introduces the algorithm flow of using Kaczmarz and random Kaczmarz methods as the inner iterations, BA-GMRES as the outer iteration, and the corresponding convergence analysis. In addition, the complexity corresponding to the three methods is calculated. Finally, numerical examples and conclusions are provided for these algorithms.

	\section{RPCG-BA-GMRES Method}
		
		\subsection{Introduction to BA-GMRES}
		
		Firstly, let's introduce the GMRES method. According to the Hamilton-Cayley theorem, given $$f(A)=a_nA^n+\cdots+a_1 A+a_0 I=0, $$we have $A^{-1}=a_nA^{n-1}+\cdots+a_1I\coloneqq q_{n-1}(A)$. Directly calculating $x=A^{-1}b$ in the solution of $Ax=b$ is computationally expensive and may cause the inverse of the matrix to not exist.
		
		Therefore, we can construct the Krylov subspace: $\mathcal{K}_l(A,r)=(r,Ar,A^2r,\cdots,A^{l-1}r)$. Here, $l\ll n$ and when $l=n$, it becomes the previous $\mathbb{R}^n$ space.
		
		GMRES mainly solves $Ax=b$ by transforming it into a least squares problem \begin{equation}\min_{r\in \mathbb{R}^n}\left\|b-Ax\right\|.\end{equation}
		
		Define $r$ is the $n$-dimensional residual in the iterative process of $Ax=b$. We need to find a basis $\mathcal{K}_n(A,r)=span{r,Ar,\cdots,A^nr}$, which is orthogonalized through the Arnoldi process, and obtain $\mathcal{K}_l(A,r)\coloneqq span{q_1,q_2,\cdots,q_l}$ after the process. We then let $x_l\in \mathcal{K}_l(A,r)$, and we have $x_l=Q_ly$, where $y\in C_l,Q_l=(q_1,q_2,\cdots,q_l)$. At this point, the dimension of $y$ is much smaller than before. The least squares problem is equivalent to
		
		\begin{equation}\begin{aligned}\label{GMRESflow}\min_{x_l\in \mathcal{K}_l}\left\|r-Ax_l\right\|&\Rightarrow\min_{y\in C^l}\left\|r-AQ_ly\right\|\Rightarrow \min_{y\in C^l}\left\|r-Q_{l+1}\bar{H}_l y\right\|\\&\Rightarrow \min_{y\in C^l}\left\|Q^T_{l+1}r-\bar{H}_l y\right\|\Rightarrow \min_{y\in C^l}\left\|\beta e_1-\bar{H}_l y\right\|. \end{aligned}\end{equation}
		
		\eqref{GMRESflow} requires the Arnoldi process, that is, $Q_l^TAQ_l=H_l$.$Q_l$ is an orthogonal matrix, and $H$ is an upper Hessenberg matrix, then there is $AQ_l=Q_{l}H$. In GMRES, first we use the Arnoldi process to standardize orthogonalization of each group of bases to obtain an orthogonal matrix $Q_l$ to achieve matrix reduction. At this point, there is $x=x_0+Q_ly$. and the computation of $Q_ly$ is smaller than the computation of directly solving $Ax=b$ in $\mathcal{K}_n$.
		
		BA-GMRES and GMRES roughly deal with linear equations with the same idea, both reduce the coefficient matrix to obtain a krylov subspace with a lower dimension to solve the linear equation system. GMRES constructs the krylov subspace during the solution process only requires a matrix of residual and coefficients at each step to generate $$\mathcal{K}_l(A,r)=(r,Ar,\cdots,A^{l-1}r),$$ and then use this set of bases to generate standard orthogonal bases, and finally obtain a set of solutions: $x=x_0+V_ly$.
  
  In BA-GMRES, we use the preprocessing of the matrix $A^TAz=A^Tv_k$Thus we get a preprocessing matrix $B^{\ell_k}$($\ell_k$ is determined by the number of iterations of the inner iteration method). So our krylov subspace becomes $$\mathcal{Z}_m(A,z)=(z_1,z_2,\cdots,z_l).$$ Next, we introduce the generation of the preconditioning matrix $B$ in BA-GMRES. 
  \begin{theorem}
  If $ A \in \mathbb{R}^{n \times n}$  and $b \in \mathbb{R}^n$ , the preconditioning matrix $B$ of $Ax=b$ is $$B=\sum_{i=0}^{k}(M^{-1}N)^iM^{-1},$$ where $A = M-N$, and $k$ is any positive integer.\end{theorem}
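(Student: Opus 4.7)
The statement is essentially asserting that $B=\sum_{i=0}^{k}(M^{-1}N)^iM^{-1}$ is a natural approximate inverse of $A$ arising from the matrix splitting $A=M-N$. My plan is to make this precise by identifying $B$ with the linear operator that produces the $(k{+}1)$-st iterate of the stationary scheme induced by the splitting, and then to verify algebraically that $BA$ differs from $I$ only by a term that vanishes as $k\to\infty$ when $\rho(M^{-1}N)<1$.

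First I would set up the classical splitting iteration. From $A=M-N$, the equation $Ax=b$ is equivalent to the fixed-point form $Mx=Nx+b$, i.e.,
\begin{equation*}
x_{j+1}=M^{-1}Nx_{j}+M^{-1}b,\qquad j=0,1,\dots,k.
\end{equation*}
Starting from $x_{0}=0$, a straightforward induction gives
\begin{equation*}
x_{j+1}=\sum_{i=0}^{j}(M^{-1}N)^{i}M^{-1}b,
\end{equation*}
so in particular $x_{k+1}=Bb$. This already identifies $B$ as the operator that produces the approximate solution obtained after $k{+}1$ sweeps of the basic splitting method, which is the sense in which $B$ serves as a preconditioner generated implicitly by inner iterations.

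Next I would verify the approximate-inverse property directly. Writing $G:=M^{-1}N$ and using $M^{-1}A=M^{-1}(M-N)=I-G$, a telescoping computation yields
\begin{equation*}
BA=\sum_{i=0}^{k}G^{i}M^{-1}A=\sum_{i=0}^{k}G^{i}(I-G)=\sum_{i=0}^{k}G^{i}-\sum_{i=1}^{k+1}G^{i}=I-G^{k+1}.
\end{equation*}
Hence $B=A^{-1}(I-(M^{-1}N)^{k+1})$, which shows both that $B$ is well defined for every positive integer $k$ and that $B\to A^{-1}$ whenever $\rho(M^{-1}N)<1$. The error operator $I-BA=(M^{-1}N)^{k+1}$ makes the quality of the preconditioner transparent.

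The hard part is not any single calculation — the telescoping and the induction are routine — but rather fixing the intended meaning of the phrase ``the preconditioning matrix $B$ of $Ax=b$,'' which is not defined in the statement. I would therefore frame the proof around the two equivalent characterizations above: (i) $Bb$ is the $(k{+}1)$-st iterate of the splitting scheme with $x_{0}=0$, and (ii) $BA=I-(M^{-1}N)^{k+1}$. Either one is sufficient to justify calling $B$ the preconditioner associated with the splitting, and taking $k$ large (under the usual spectral-radius assumption on $M^{-1}N$) makes $B$ arbitrarily close to $A^{-1}$, which is precisely what a good preconditioner should do in the BA-GMRES framework described earlier in the paper.
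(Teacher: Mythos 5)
Your proof is correct and follows essentially the same route as the paper: both unroll the stationary iteration $x_{j+1}=M^{-1}Nx_{j}+M^{-1}b$ induced by the splitting $A=M-N$ and read off $B$ as the resulting linear map (the paper starts from $z_0=M^{-1}v_k$ rather than $x_0=0$, which is the same computation shifted by one step). Your telescoping identity $BA=I-(M^{-1}N)^{k+1}$ also appears in the paper immediately after the proof, stated there as $B^{\ell_k}A=I-H^{k}$; your exponent $k+1$ is in fact the correct one.
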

  \begin{proof}
 
For $A^TAz = v_k$, if we split $A$ into two matrices $A = M-N$,
  \begin{equation}\begin{aligned}
		(M-N)z=v_k
		\Rightarrow M(I-M^{-1}N)z=v_k
		\Rightarrow z-M^{-1}Nz=M^{-1}v_k
	\end{aligned}.\end{equation} We can obtain the iterative formula for $z$ \begin{equation}\begin{aligned}\label{stable inner iteration}z_{k}&=M^{-1}Nz_{k-1}+M^{-1}v_k\\&=M^{-1}N(M^{-1}Nz_{k-2}+M^{-1}v_k)+M^{-1}v_k\\
&=(M^{-1}N)^2z_{k-2}+M^{-1}NM^{-1}v_k+M^{-1}v_k\\
&=(M^{-1}N)^kz_0+\sum_{i=0}^{k-1}(M^{-1}N)^iM^{-1}v_k\\
&=\sum_{i=0}^{k}(M^{-1}N)^iM^{-1}v_k.
\end{aligned}\end{equation} where we take $z_0 = M^{-1}v_k$.     
  \end{proof}
	
	We have $B^{\ell_k}=\sum_{i=0}^{k}(M^{-1}N)^iM^{-1}$. In the left preconditioning process, we can transform it into solving $B^{\ell_k}A$ instead of directly solving the preconditioning matrix $B^{\ell_k}$. Let $H = M^{-1}N$, then $A = M(I-H)$, and we have \begin{equation}B^{\ell_k}A = I-H^k.\end{equation}
	
	In the right-hand vector $v_k$ corresponding to $A^TAz = A^Tv_k$, we can use the Kaczmarz iteration to obtain $z$. And in the iterative process of solving $z$ vector, we can obtain the maximum iteration number $\ell_{max}$. Then, we can generate the smallest $\ell$ such that $\left\|B^\ell Av_k-v_k\right\|\leqslant\eta\left\|v_k\right\|$, and set $\ell_k=\min\{\ell_{max},\ell\}$. At this point, the preconditioning matrix $B^{\ell_k}$ can be determined, and we can obtain the new Krylov subspace vector $z_k = B^{\ell_k}v_k$. The previous Krylov subspace is defined as $V$, and the subsequent Krylov subspace is defined as $Z$. As shown above, a new vector $z_k$ can be obtained at each iteration. Then, we use the Arnoldi process to orthogonally normalize this space, and obtain $Z_l=(z_1,z_2,\cdots,z_l)$, which is a orthogonal matrix with many more rows than columns. Finally, we can obtain the true solution using $x=x_0+Z_my$. The algorithm is as follows.
		
		\begin{algorithm}[H]
			\caption{BA-GMRES method\cite{refP1}.}
			\begin{algorithmic}
				\STATE 1. Set initial values $x_0$ and $r_0=b-Ax_0$.
				
				\STATE 2. Apply $l$ iterations of stable iteration $A^TAz=A^Tr_0$ to obtain $z_0=B^{(l)}r_0$.
				
				\STATE 3. $\beta=\left\|z_0\right\|_2,v_1=z_0/\beta$
				
				\STATE 4. For $k=0,1,2\cdots,$ until convergence, do
				
				\STATE 5. $u_k=Av_k$.
				
				\STATE 6. Apply $l$ iterations of stable iteration $A^TAz=A^Tu_k$ to obtain $z_k=B^{(l)}u_k$.
				
				\STATE 7. For $i=1,2,\cdots,k$, do
				
				\STATE 8. $h_{i,k}=(z_k,v_i),z_k=z_k-h_{i,k}v_i$
				
				\STATE 9. End for
				
				\STATE 10. $h_{k+1,k}=\left\|z_k\right\|_2,v_{k+1}=z_k/h_{k+1,k}$
				
				\STATE 11. End for
				
				\STATE	12.$y_k=argmin_{y\in R^k}\left\|\beta e_1-\bar{H}_ky\right\|_2,x_k=x_0+[v_1,v_2,\cdots,v_k]y_k$
				
			\end{algorithmic}
		\end{algorithm}

	\subsection{Convergence Analysis of BA-GMRES}
		\begin{Definition}\rm(\cite{refP1}, definition 4.1)
			A matrix $H$ is semi-convergent if and only if $\lim_{i\to\infty}H^i$ exists.
		\end{Definition}
		
		\begin{theorem}\label{th1}
			\rm(\cite{refP1}, Theorem 4.6) Suppose $H$ is semi-convergent. By using the formula \eqref{stable inner iteration}, the preconditioning matrix $B^{\ell_k}$ is determined. The minimum least squares solution of $\left\|b-Ax\right\|_2$ is finally determined for all $b\in \mathbb{R}^m$ and $x_0\in \mathbb{R}^n$.
		\end{theorem}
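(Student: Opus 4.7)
The plan is to reduce the convergence statement to a standard null-space characterization of left-preconditioned GMRES and then verify the hypothesis of that characterization from the semi-convergence of $H$.

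First I would invoke the known breakdown-free result of Hayami, Yin, and Ito: for every $b\in\mathbb{R}^m$ and every initial guess $x_0\in\mathbb{R}^n$, BA-GMRES applied to $\min_{x}\|Bb-BAx\|_2$ returns a least-squares solution of $\min_{x}\|b-Ax\|_2$ if and only if $\mathcal{N}(BA)=\mathcal{N}(A)$, equivalently $\mathcal{R}(A^{T})=\mathcal{R}((BA)^{T})$. This is itself a corollary of the classical GMRES fact that GMRES on $Cx=c$ determines a least-squares solution for every $c$ exactly when $\mathcal{R}(C)=\mathcal{R}(C^{T})$. Having reduced the claim to a null-space identity, the remainder of the argument becomes purely spectral.

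Next I would exploit the identity $B^{\ell_k}A=I-H^{\ell_k}$ derived earlier together with the splitting $A=M(I-H)$. Since $M$ is nonsingular,
\[
\mathcal{N}(A)=\mathcal{N}(I-H),\qquad \mathcal{N}(B^{\ell_k}A)=\mathcal{N}(I-H^{\ell_k}),
\]
so the hypothesis of the characterization becomes a single spectral equality, namely $\mathcal{N}(I-H)=\mathcal{N}(I-H^{\ell_k})$, which must hold for every inner-iteration length $\ell_k$ produced by the algorithm.

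The main obstacle, and the heart of the proof, is to derive this equality from the sole assumption that $H$ is semi-convergent. Semi-convergence is equivalent to requiring (i) every eigenvalue $\lambda$ of $H$ satisfies either $\lambda=1$ or $|\lambda|<1$, and (ii) the eigenvalue $1$ is semisimple, so that its algebraic and geometric multiplicities coincide. The inclusion $\mathcal{N}(I-H)\subseteq\mathcal{N}(I-H^{\ell_k})$ is immediate. For the reverse, I would pass to the Jordan form of $H$: any vector $v$ with $H^{\ell_k}v=v$ must vanish on all invariant subspaces attached to eigenvalues of modulus strictly less than one, because $\lambda^{\ell_k}\neq 1$ there, and on the invariant subspace for $\lambda=1$ semisimplicity collapses $I-H^{\ell_k}$ and $I-H$ onto the same projector, forcing $Hv=v$. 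Combining this null-space equality with the characterization from the first step then yields that Algorithm~1 determines a least-squares solution of $\|b-Ax\|_2$ for every right-hand side $b$ and every starting vector $x_0$, which is the claim.
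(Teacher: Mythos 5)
First, a point of comparison: the paper does not actually prove this statement --- it is quoted from \cite{refP1} (Theorem 4.6) and used later as a black box --- so your proposal can only be measured against the proof in that reference. Your overall skeleton does match it in spirit: Morikuni and Hayami likewise reduce the claim to range/null-space identities for the preconditioned operator, use the identity $B^{(\ell)}A=I-H^{\ell}$, and exploit semi-convergence of $H$ exactly as you do to prove $\mathcal{N}(I-H^{\ell})=\mathcal{N}(I-H)$ (discard the Jordan blocks with $|\lambda|<1$ since $\lambda^{\ell}\neq 1$ there, and use semisimplicity of the eigenvalue $1$). That spectral lemma is correct and is genuinely one of the key ingredients.

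There is, however, a real gap in your first step. The characterization you invoke --- that BA-GMRES determines a least squares solution of $\min_x\|b-Ax\|_2$ for every $b$ and $x_0$ if and only if $\mathcal{N}(BA)=\mathcal{N}(A)$ --- is false as stated. Take $A=(1,0)^{T}\in\mathbb{R}^{2\times 1}$ and $B=(1,1)\in\mathbb{R}^{1\times 2}$: then $\mathcal{N}(BA)=\mathcal{N}(A)=\{0\}$, but the minimizer of $\|B(b-Ax)\|_2$ is $x=b_1+b_2$ while the least squares solution of $\min_x\|b-Ax\|_2$ is $x=b_1$. The condition $\mathcal{N}(BA)=\mathcal{N}(A)$ says nothing about whether a minimizer of the preconditioned residual minimizes the original residual, nor about whether GMRES applied to the singular system $B^{(\ell)}Az=B^{(\ell)}r_0$ terminates at such a minimizer without breakdown. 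The argument in \cite{refP1} additionally establishes, from semi-convergence and the factorization $B^{(\ell)}=\bigl(\sum_{i=0}^{\ell-1}H^{i}\bigr)M^{-1}A^{T}$, that $\mathcal{N}(B^{(\ell)})=\mathcal{N}(A^{T})$ (so that $B^{(\ell)}(b-Ax)=0$ forces the normal equations $A^{T}(b-Ax)=0$) and that $\mathcal{R}(B^{(\ell)})=\mathcal{R}(B^{(\ell)}A)$ with $\mathcal{R}(B^{(\ell)}A)\cap\mathcal{N}(B^{(\ell)}A)=\{0\}$ (so the preconditioned system is consistent and GMRES is breakdown-free); none of this follows from $\mathcal{N}(BA)=\mathcal{N}(A)$ alone. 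Relatedly, your step ``$\mathcal{N}(A)=\mathcal{N}(I-H)$ since $M$ is nonsingular and $A=M(I-H)$'' does not typecheck in the setting of the theorem: the splitting is of $A^{T}A=M-N$ (the inner iteration is applied to $A^{T}Az=A^{T}v$), $A$ may be rectangular and rank-deficient, and the correct chain is $\mathcal{N}(A)=\mathcal{N}(A^{T}A)=\mathcal{N}(M(I-H))=\mathcal{N}(I-H)$. These defects are repairable, but as written the reduction step --- precisely the part you treat as known --- is where most of the content of the theorem lives.
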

		
		For the convergence analysis of the BA-GMRES method, we provide a convergence bound for BA-GMRES.
		
		\begin{theorem}\label{th2}
		\rm(\cite{refP1}, Theorem 4.9) Let $r_k$ be the k-th residual of BA-GMRES preprocessed by the $B^{\ell_k}$ inner iteration, and let $T$ be the Jordan basis of $B^{\ell_k}A$. Suppose $H$ is semi-continuous. Then, we have
			
			\begin{equation}
				\left\|B^{\ell_k}r_k\right\|_2\leqslant\kappa(T)\sum_{i=0}^{\tau(\ell_k,d)} \genfrac ( ) {0pt}{0}{k}{i}\rho(H)^{k\ell_k-i}\left\|B^{(\ell_k)}r_0\right\|_2.
			\end{equation}
			
			for all $x_0\in \mathbb{R}(B^{\ell_k})$ and $b\in \mathbb{R}^m$, where $\kappa(T)=\left\|T\right\|_2\left\|T^{-1}\right\|_2$, $d$ is the order of the largest Jordan block corresponding to $B^{\ell_k}A$, and $\tau(\ell_k,d)=\min(\ell_k,d-1)$.
		\end{theorem}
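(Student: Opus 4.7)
The plan is to combine the GMRES polynomial-minimization property applied to the left-preconditioned system with the identity $B^{\ell_k}A = I - H^{\ell_k}$ from the previous subsection, and then to control the resulting matrix polynomial via the Jordan decomposition $B^{\ell_k}A = T J T^{-1}$. First, by standard BA-GMRES optimality, after $k$ outer steps the preconditioned residual has the form $B^{\ell_k}r_k = p_k(B^{\ell_k}A)B^{\ell_k}r_0$ for some polynomial $p_k$ of degree at most $k$ with $p_k(0)=1$, and minimality over that class implies
$$\|B^{\ell_k}r_k\|_2 \leq \|q(B^{\ell_k}A)\|_2 \, \|B^{\ell_k}r_0\|_2 \leq \kappa(T)\,\|q(J)\|_2 \, \|B^{\ell_k}r_0\|_2$$
for every polynomial $q$ of degree $\leq k$ with $q(0)=1$. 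The problem then reduces to choosing a convenient test polynomial $q$ and estimating $q(J)$ block-by-block.

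The natural choice is $q(\lambda) = (1-\lambda)^k$, which satisfies $q(0)=1$ and, since $B^{\ell_k}A = I - H^{\ell_k}$, enjoys the identity $q(B^{\ell_k}A) = H^{k\ell_k}$. On a single Jordan block $J_\mu = \mu I + N$ of size $s\leq d$, the Taylor expansion truncates at $N^{s-1}$, giving
$$q(J_\mu) = \sum_{i=0}^{s-1} \binom{k}{i}(-1)^i (1-\mu)^{k-i} N^i.$$
Each eigenvalue $\mu$ of $B^{\ell_k}A$ is of the form $1 - \lambda^{\ell_k}$ with $\lambda$ an eigenvalue of $H$, so $|1-\mu| = |\lambda|^{\ell_k} \leq \rho(H)^{\ell_k}$; coupled with $\|N\|_2 \leq 1$ and the hypothesis that $H$ is semi-convergent (which, via Theorem~\ref{th1}, guarantees $\rho(H)\leq 1$ and that $B^{\ell_k}$ is well defined), this controls each Jordan block and therefore $\|q(J)\|_2$.

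The main obstacle will be matching the precise form stated in the theorem, specifically the summation range $\tau(\ell_k,d)=\min(\ell_k,d-1)$ and the exponent $\rho(H)^{k\ell_k-i}$. The raw Jordan calculation above naturally produces factors $\rho(H)^{\ell_k(k-i)}$ summed over $0\leq i\leq \min(k,d-1)$, so extracting the stated form requires tracking how the Jordan structure of $H$ propagates into that of $I - H^{\ell_k}$: blocks of $H$ at eigenvalue $0$ yield blocks of $I - H^{\ell_k}$ centered at $1$ whose nilpotent part collapses beyond index $\ell_k - 1$, and this collapse is what produces the cutoff $i\leq \ell_k - 1$ and the refined exponent. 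Once this block-by-block correspondence is carried through and recombined with $\kappa(T) = \|T\|_2\|T^{-1}\|_2$, summing the block contributions yields the stated inequality for all $x_0\in\mathcal{R}(B^{\ell_k})$ and $b\in\mathbb{R}^m$.
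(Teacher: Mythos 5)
Your opening step --- writing $B^{\ell_k}r_k = p_k(B^{\ell_k}A)\,B^{\ell_k}r_0$, invoking GMRES optimality over polynomials with $p(0)=1$, and peeling off $\kappa(T)$ through the Jordan decomposition --- is exactly the first half of the paper's proof. Where you diverge is in what happens next: the paper simply cites Theorems 4.8, 2 and 5 of \cite{refP2} for the bound $\min_{p}\max_{i}\left\|p(J_i)\right\|_2\leqslant\sum_{i=0}^{\tau}\binom{k}{i}\rho(H)^{k\ell_k-i}$, whereas you try to derive that bound directly using the test polynomial $q(\lambda)=(1-\lambda)^k$ and the identity $B^{\ell_k}A=I-H^{\ell_k}$. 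That is the standard polynomial and the right identity, but your derivation stops exactly where the cited theorems begin, so the crux is left unproved.

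Concretely: expanding $q$ on the Jordan blocks of $B^{\ell_k}A$, as you do, yields $\sum_{i=0}^{\min(k,d-1)}\binom{k}{i}\rho(H)^{\ell_k(k-i)}$. You cannot pass from this to the stated $\sum_{i=0}^{\min(\ell_k,d-1)}\binom{k}{i}\rho(H)^{k\ell_k-i}$ by a crude relaxation: since $\rho(H)\leqslant1$ and $i\ell_k\geqslant i$, each of your terms $\rho(H)^{k\ell_k-i\ell_k}$ is at least as large as the corresponding target term $\rho(H)^{k\ell_k-i}$, and your summation cutoff $\min(k,d-1)$ can strictly exceed $\min(\ell_k,d-1)$ --- so your raw estimate is \emph{weaker} than the claimed one in both the number of terms and their size. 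You honestly flag this mismatch and propose to repair it by tracking how Jordan blocks of $H$ map into Jordan blocks of $I-H^{\ell_k}$ (the collapse of the nilpotent part beyond index $\ell_k-1$), but that correspondence is only asserted, not established, and it is precisely the nontrivial content here. Until that block-structure argument is carried out in full --- or the external results are invoked, as the paper does --- the proof is incomplete at its decisive step.
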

		
		\begin{proof}
			Theorem(\ref{th1}) ensures that the minimum least squares solution of $\min_{x\in \mathbb{R}^n}\left\|b-Ax\right\|_2$ is determined by the BA-GMRES preprocessed by inner iteration for all $x_0\in \mathbb{R}(B^{\ell_k})$ and $b\in \mathbb{R}^m$. In \cite{refP2}, we have
			
			\begin{equation}
				\left\|B^{\ell_k}r_k\right\|_2=\mathop{\min}_{
					p\in P_k
					\atop
					p(0)=1}\left\|p(B^{\ell_k}A)B^{\ell_k}r_0\right\|_2\leqslant\kappa(T)\left(\mathop{\min}_{p\in P_k
					\atop
					p(0)=1}\max_{1\leqslant i\leqslant s}\left\|p(J_i)\right\|_2\right)\left\|B^{\ell_k}r_0\right\|_2.
			\end{equation}
			
			where $P_k$ is the set of all polynomials of degree at most $k$, $J_i$ is the Jordan block corresponding to a non-zero eigenvalue of $B^{(\ell_k)}A,i=1,2,\cdots,s$. The bounds for $\mathop{\min}{p\in P_k
				\atop
				p(0)=1}\max{1\leqslant i\leqslant s}\left\|p(J_i)\right\|_2$ are given in Theorem 4.8, Theorem 2, and Theorem 5 in \cite{refP2}.
			
			\begin{equation}\mathop{min}_{p\in P_k
									\atop
									p(0)=1}\max_{1\leqslant i\leqslant s}\left\|p(J_i)\right\|_2\leqslant\sum_{i=0}^{\tau(k, d)}(\genfrac ( ) {0pt}{0}{k}{i}\rho(H)^{kl-i}. \end{equation}
			
			Therefore, the theorem is proved.
		\end{proof}

	We use the RPCG method as the inner iteration and the BA-GMRES method as the outer iteration to solve the equation $Ax=b$. In the following, we introduce the RPCG method.
	
	\subsection{Introduction to RPCG Method}
	
	Let $A=PHQ$, where $H\in \mathbb{R}^{n\times n}$ is a symmetric positive definite matrix, and $P$ and $Q$ are non-singular matrices. Let $W=Q^{-1}P^T$. Assume that $M=PGQ$ is the preconditioner of the matrix $A\in \mathbb{R}^{n\times n}$, where $G\in \mathbb{R}^{n\times n}$ is a symmetric positive definite matrix. Since $G$ is a symmetric positive definite matrix, there exists a non-singular matrix $S\in \mathbb{R}^{n\times n}$ such that $G=S^TS$. Furthermore, if we let
	\begin{equation}
		\mathbf{x}=(SQ)x\quad \text{and} \quad \mathbf{b}=(PS^T)^{-1}b,
	\end{equation}
	then the linear system \eqref{eq0} can be equivalently written as
	\begin{equation}\label{eq2}
		\mathbf{Rx}=\mathbf{b}, \quad \text{where} \quad \mathbf{R}=(PS^T)^{-1}A(SQ)^{-1}.
	\end{equation}
	Clearly, $\textbf{R}=(PS^T)^{-1}A(SQ)^{-1}=S^{-T}HS^{-1}$ is a symmetric positive definite matrix, so we can use the Conjugate Gradient (CG) method to iteratively solve the linear system \eqref{eq2} and obtain the RPCG method, which fully utilizes the structure of the original matrix. First, we give the specific algorithmic process of the Conjugate Gradient method.
	
	\begin{algorithm}[H]
		\begin{algorithmic}
			\caption{CG Method for Solving $Rx=b$}
			\label{al1}
			\STATE Step 1:Choose initial value $x_0\in \mathbb{R}^n$
			\STATE Step 2:for $k=0,1,2,\cdots$
			\STATE Step 3:$\alpha_k=-\frac{\boldsymbol{r}k^T\boldsymbol{r}k}{\boldsymbol{p}k^T\mathbf{R}\boldsymbol{p}k}$
			\STATE Step 4:$\mathbf{x}_{k+1}=\mathbf{x}_k-\alpha_k\boldsymbol{p}k$
			\STATE Step 5:$r{k+1}=r_k+\alpha_kRp_k$
			\STATE Step 6:$\beta_k=\frac{r{k+1}^Tr{k+1}}{r{k}^Tr_k}$
			\STATE Step 7:$p{k+1}=r_{k+1}+\beta_kp_k.$
		\end{algorithmic}
	\end{algorithm}
	
	The above method \ref{al1} can be redefined using information about the original linear system \eqref{eq1}. Given an initial vector $x_0\in \mathbb{R}^n$, let $r_0=b-Ax_0$, $z_0=M^{-1}r_0$, $p_0=z_0$, $v_0=W^{-1}z_0$, and $q_0=v_0$. Define
	\begin{equation}\label{eq3}
			\begin{cases}
				x_k=(SQ)^{-1}\boldsymbol{x}_k,\quad r_k=(PS^T)\boldsymbol{r}_,\quad z_k=(SQ)^{-1}\boldsymbol{r}_k,\\
				p_k=(SQ)^{-1}\boldsymbol{p}_k,\quad v_k=W^{-1}z_k,\quad q_k=W^{-1}p_k.
			\end{cases}
	\end{equation}
	Then we have
	\begin{equation}\label{eq4}\boldsymbol{r_0} = \boldsymbol{b-Rx_0}=(PS^T)^{-1}(b-Ax_0)=(PS^T)^{-1}r_0,\end{equation}and
		\begin{equation}\label{eq40}r_k=(PS^T)\boldsymbol{r_k}=(PS^T)r_k=(PS^T)(SQ)z_k=Mz_k,
	\end{equation}Moreover Since
	\begin{equation}\label{eq41}\boldsymbol{r}_k^T\boldsymbol{r}_k=(SQz_k)^T(SQz_k)=z_k^TQ^TP^{-1}Mz_k=v_k^Tr_k,\end{equation}
	and
	\begin{equation}\label{eq44}\boldsymbol{p}_k^T\boldsymbol{R}\boldsymbol{p}_k=(SQp_k)^T(PS^T)^{-1}A(SQ)^{-1}(SQp_k),\end{equation}
	We can get \begin{equation}\label{eq42}\alpha_k=-\frac{\boldsymbol{r}_k^T\boldsymbol{r}_k}{\boldsymbol{p}_k^T\boldsymbol{R}\boldsymbol{p}_k}=-\frac{v_k^Tr_k}{q_k^TAp_k},\end{equation}
	and
	\begin{equation}\label{eq43}\beta_k=\frac{\boldsymbol{r}_{k+1}^T\boldsymbol{r}_{k+1}}{\boldsymbol{r}_k^T\boldsymbol{r}_k}=\frac{v_{k+1}^Tr_{k+1}}{v_k^Tr_k}.\end{equation}
	we can rewrite Algorithm \ref{al1} in terms of \eqref{eq42} and \eqref{eq43} to obtain the RPCG algorithm.
	
	\begin{algorithm}[H]
		\begin{algorithmic}
			\caption{RPCG Method for S solving $Ax=b$}
			\label{al2}
			\STATE Step 1: Choose initial value $x_0\in \mathbb{R}^n$
			\STATE Step 2:$r_0=b-Ax_0, \quad z_0=M^{-1}r_0, \quad p_0=z_0, \quad v_0=W^{-1}z_0, \quad q_0=v_0$
			\STATE Step 3:for $k=0,1,2,\cdots$
			\STATE Step 4:$\alpha_k=-\dfrac{v_k^Tr_k}{q_k^TGq_k}$
			\STATE Step 5:$x_{k+1}=x_k+\alpha_kp_k$
			\STATE Step 6:$r_{k+1}=r_k+\alpha_kAv_k$
			\STATE Step 7:$z_{k+1}=M^{-1}r_{k+1}$
			\STATE Step 8:$v_{k+1}=W^{-1}z_{k+1}$
			\STATE Step 9:$\beta_k=\dfrac{v_{k+1}^Tr_{k+1}}{v_k^Tr_k}$
			\STATE Step 10: $p_{k+1}=z_{k+1}+\beta_kp_k$
			\STATE Step 11:$q_{k+1}=v_{k+1}+\beta_kq_k.$
		\end{algorithmic}
	\end{algorithm}

where $W=Q^{-1}P^T, P, Q\in  \mathbb{R}^{n\times n}$ are two non-singular matrices,  where $H\in  \mathbb{R}^{n\times n}$, $A= PHQ$, $H$ is symmetrical positive,  $M = PGQ$ is $A$ pre-regulator,  $G$ is $H$ approximation. Note that Algorithm \ref{al2} is equivalent to the CG method applied to the transformed system \eqref{eq2}. The RPCG method has the same convergence properties as the CG method, but it is more efficient for solving large linear systems with a specific structure, such as those arising from finite element methods.

From the above analysis, it can be seen that finding the two non-singular matrices of $P$ and $Q$ is the key to the RPCG method. Therefore, we use the elementary transformation of the chunked matrix to obtain the specific expressions of the $P$ and $Q$ matrices.
First, let's divide the coefficient matrix into $$A=\begin{bmatrix}
	T&B\\C&D
\end{bmatrix}.$$
Next, 
$$
\overbrace{\begin{bmatrix}
		I&0\\-CT^{-1}&I
		\end{bmatrix}}^{P^{-1}}
	\overbrace{\begin{bmatrix}
	T&B\\C&D
\end{bmatrix}}^{A}\overbrace{\begin{bmatrix}
		I&-T^{-1}B\\0&I
		\end{bmatrix}}^{Q^{-1}}
=\overbrace{\begin{bmatrix}
	T&0\\0&S
	\end{bmatrix}}^{H}. $$ $$P=\begin{bmatrix}
I&0\\CT^{-1}&I
\end{bmatrix}, Q=\begin{bmatrix}
I&T^{-1}B\\0&I
\end{bmatrix},$$
where $S=D-CT^{-1}B$ is $A$'s Schur complement matrix.

Finally, let's find the symmetric positive definite approximation matrix $G$ of the matrix $H$. Since the $T$ chunks in the matrix are symmetrically positively determined, there will be $T=E^TE$, where the $E$ matrix is an upper triangular matrix.
Extract the elements of the upper triangular portion of $S$ and add them to the lower trigonometric to make it a symmetric matrix $\hat{S}$. Then there is $\hat{S}=F^TF$,  $$G=\begin{bmatrix}
	E&0\\0&F
\end{bmatrix}^T\begin{bmatrix}
E&0\\0&F
\end{bmatrix}.$$

 Combined with the BA-GMRES method, the specific algorithm flow obtained is as follows.

 \begin{algorithm}[H]
	\caption{RPCG-BA-GMRES algorithm flow}\label{tab:算法流程}
	\begin{algorithmic}
		\STATE	Take the initial value $x_0$,$r=b-Ax_0$
		\STATE	$v_1=r/\left\|r\right\|_2$	
		\STATE	for $k=1,2\cdots$until convergence do
		\STATE   Apply RPCG methods to solve $Az=v_k$
		\STATE	Compute $$B^{l_k}=(M^{-1}N)^kz_0+\sum_{i=0}^{k-1}(M^{-1}N)^iM^{-1}$$
		where $A=M-N$, $l_{max}$ is the maximum number of iterations obtained by solving $Az=v_k$ by the RPCG method,$l_k=\min\{l_{max},l\}$, $l$ is satisfied
  $\left\|BAv_k-v_k\right\|_2\leqslant\left\|v_k\right\|_2$
		\STATE	Solve $z_k=B^{l_k}v_k$
		\STATE	 $w_k=BAz_k$
		\STATE	for i=1,2$\cdots,k$,do
		\STATE	$h_{i,k}=w_k^Tv_i,w_k=w_k-h_{i,k}v_i$
		\STATE	end for
		\STATE	11:$h_{k+1,k}=\left\|w_k\right\|_2,v_{k+1}=w_k/h_{k+1,k}$
		\STATE	end for
		\STATE	$y_k=argmin_{y\in R^k}\left\|\beta e_1-\bar{H}_ky\right\|_2,$
		\quad $u_k=[v_1,v_2,\cdots,v_k]y_k,$
		\quad where $\bar{H}_k=\{h_{ij}\}_{1\leqslant i\leqslant k+1;1\leqslant j\leqslant k}$
		\STATE	$x_k=x_0+u_k$
	\end{algorithmic}
\end{algorithm}

	\subsection{Convergence Analysis of RPCG Method}
	
	\begin{theorem}
		Let $A\in\mathbb{R}^{n\times n}$ be a non-singular matrix, and let there exist two non-singular matrices $P$ and $Q$ such that $A=PHQ$ where $H\in \mathbb{R}^{n\times n}$ is a symmetric positive definite matrix. Let $M=PGQ$, where $G$ is a symmetric positive definite matrix. If the RPCG method starts from an initial vector $x_0\in \mathbb{R}^n$ and generates an approximate solution $x_k$ to the linear system \eqref{eq1} after $k$ iterations, then the error $\left\|x_k-x^*\right\|$ satisfies the following inequality.
		\begin{equation}
			\left\|x_k-x^*\right\|_{W^{-T}A}\leqslant2\left(\frac{\sqrt{\kappa(M^{-1}A)}-1}{\sqrt{\kappa(M^{-1}A)}+1}\right)^k\left\|x_0-x^*\right\|_{W^{-T}A}.
		\end{equation}
		for any $x\in \mathbb{R}^n$ and any symmetric positive definite matrix $X\in \mathbb{R}^{n\times n}$, $\left\|x\right\|_X=\sqrt{(x,Xx)}=\sqrt{x^TXx}$, and $\kappa(M^{-1}A)$ represents the Euclidean condition number of the matrix $M^{-1}A$.
	\end{theorem}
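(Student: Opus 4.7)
The strategy is to exploit the equivalence, already recorded below Algorithm~\ref{al2}, between RPCG applied to $Ax=b$ and the classical CG method applied to the transformed system $\mathbf{R}\mathbf{x}=\mathbf{b}$ in \eqref{eq2}, where $\mathbf{R}=S^{-T}HS^{-1}$ is symmetric positive definite with $G=S^{T}S$. Since $\mathbf{R}$ is SPD, the standard Chebyshev bound for CG gives
\begin{equation*}
\|\mathbf{x}_k-\mathbf{x}^{*}\|_{\mathbf{R}}\;\leqslant\; 2\left(\frac{\sqrt{\kappa(\mathbf{R})}-1}{\sqrt{\kappa(\mathbf{R})}+1}\right)^{k}\|\mathbf{x}_0-\mathbf{x}^{*}\|_{\mathbf{R}},
\end{equation*}
and my plan is to pull this bound back to the original variables by using the change of coordinates $\mathbf{x}=SQx$ and $\mathbf{b}=(PS^{T})^{-1}b$.

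For the norm on the left, I would compute directly: with $\mathbf{x}=SQx$, one has
\begin{equation*}
\|\mathbf{x}\|_{\mathbf{R}}^{2}=(SQx)^{T}S^{-T}HS^{-1}(SQx)=x^{T}Q^{T}HQx,
\end{equation*}
so the proof reduces to identifying $Q^{T}HQ$ with $W^{-T}A$. Using $W=Q^{-1}P^{T}$ I obtain $W^{-T}=Q^{T}P^{-1}$, and then $W^{-T}A=Q^{T}P^{-1}(PHQ)=Q^{T}HQ$, which gives $\|x_k-x^{*}\|_{W^{-T}A}=\|\mathbf{x}_k-\mathbf{x}^{*}\|_{\mathbf{R}}$ and likewise for $x_0$. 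Thus the factor-of-two Chebyshev bound transfers verbatim from the transformed CG iterates to the RPCG iterates.

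It remains to replace $\kappa(\mathbf{R})$ by $\kappa(M^{-1}A)$. Here I would argue by a similarity transformation. From $M=PGQ=PS^{T}SQ$ and $A=PHQ$ I get
\begin{equation*}
M^{-1}A=Q^{-1}S^{-1}S^{-T}HQ,\qquad \mathbf{R}=S^{-T}HS^{-1},
\end{equation*}
and a direct multiplication shows $(SQ)\,M^{-1}A\,(SQ)^{-1}=\mathbf{R}$. Therefore $M^{-1}A$ and $\mathbf{R}$ are similar, have identical spectra, and (because $\mathbf{R}$ is SPD) positive real eigenvalues whose ratio is what the statement means by $\kappa(M^{-1}A)$; hence $\kappa(M^{-1}A)=\kappa(\mathbf{R})$.

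The only subtle point, and where I expect the most care to be needed, is this last step: $M^{-1}A$ is generally non-symmetric, so its Euclidean condition number $\sigma_{\max}/\sigma_{\min}$ need not equal the eigenvalue ratio of $\mathbf{R}$. I would either (i) explicitly state the convention that $\kappa(M^{-1}A)$ denotes the ratio of largest to smallest eigenvalue inherited from the SPD similarity class, which is the natural object governing CG convergence on the transformed system, or (ii) rephrase the theorem in terms of $\kappa(\mathbf{R})$ and then note the similarity. The remaining steps are purely bookkeeping and the result follows immediately by stringing together the CG bound with the two identities $\|\cdot\|_{\mathbf{R}}=\|\cdot\|_{W^{-T}A}$ and $\kappa(\mathbf{R})=\kappa(M^{-1}A)$.
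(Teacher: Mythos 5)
Your proposal is correct and follows essentially the same route as the paper: both pull the standard CG Chebyshev bound back through the change of variables $\mathbf{x}=SQx$, identify $\|\cdot\|_{\mathbf{R}}$ with $\|\cdot\|_{W^{-T}A}$ via $W^{-T}A=Q^{T}HQ$, and use the similarity $M^{-1}A=(SQ)^{-1}\mathbf{R}(SQ)$ to equate the condition numbers (the paper writes the factorization as $G=L^{T}L$ rather than $S^{T}S$, which is immaterial). Your closing remark that the ``Euclidean condition number'' of the generally non-symmetric matrix $M^{-1}A$ must be read as the eigenvalue ratio inherited from the SPD matrix $\mathbf{R}$ is a genuine subtlety the paper glosses over and is worth retaining.
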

	
	\begin{proof}
		Since the convergence rate of the conjugate gradient method satisfies the inequality
		\begin{equation}\label{eq01}
			\left\|x_k-x^*\right\|_A\leqslant2\left(\frac{\sqrt{\kappa(A)}-1}{\sqrt{\kappa(A)}+1}\right)^k\left\|x_0-x^*\right\|_A,
		\end{equation}
		we only need to prove that $\kappa(W^{-T}A)=\kappa(M^{-1}A)$.
		
		Since $W^{-T}A=(Q^TP^{-1})(PHQ)=Q^THQ$, where $Q\in \mathbb{R}^{n\times n}$ is a non-singular matrix and $H\in \mathbb{R}^{n\times n}$ is a symmetric positive definite matrix, we know that $W^{-T}A$ is a symmetric positive definite matrix. Moreover, since $G=L^TL$ and $L$ is a non-singular matrix, we have
		$$\begin{aligned}M^{-1}A=&(PGQ)^{-1}(PHQ)=Q^{-1}G^{-1}HQ=(LQ)^{-1}(L^{-T}HL^{-1})(LQ)=(LQ)^{-1}R(LQ)\end{aligned}$$
		Therefore, $\kappa(M^{-1}A)=\kappa(R)$. Additionally, for any $x\in \mathbb{R}^n$ satisfying $x=(SQ)x$, we have
		$$\begin{aligned}\left\|\textbf{x}\right\|_A^2=\left\|\textbf{x}\right\|_\textbf{R}^2&=(\textbf{x},\textbf{Rx})=((LQ)x,((PL^T)^{-1}A(LQ)^{-1})(LQ)x)\\&=((LQ)x,(PS^T)^{-1}Ax)=x^TW^{-T}Ax=\left\|\right\|^2_{W^{-T}A}.\end{aligned}$$
		Now the conclusion of this theorem directly follows by applying the standard convergence theorem to the linear system \eqref{eq01}.
	\end{proof}
	
	\section{ADI-BA-GMRES Method}
 \subsection{Introduction to ADI Method}
	
	First, perform left preconditioning on the linear system $Ax=b$ and get $FAx=Fb$, where $F$ is the diagonal of the coefficient matrix $A$, and if any diagonal element is zero, it is replaced with $1$ to ensure that $F$ is invertible. This simple diagonal preconditioning can greatly improve convergence speed \cite{refP28}. The linear system becomes $\hat{A}x=\hat{b}$.
	
	Considering two forms of the split of $\hat{A}$:
	\begin{equation}
	\begin{aligned}
		\hat{A}&=H+\alpha I-(\alpha I-S),\\
		\hat{A}&=S+\alpha I-(\alpha I-H),
	\end{aligned}
	\end{equation}
	where $I$ represents the identity matrix.
	
	Given an initial guess $x^0$, compute a sequence ${x^k}$ using the iteration formula:
	\begin{equation}\label{eq16}
		\begin{cases}
			(H+\alpha I)x^{k+\frac{1}{2}}=(\alpha I-S)x^k+\hat{b}\\
			(S+\alpha I)x^{k+1}=(\alpha I-H)x^{k+\frac{1}{2}}+\hat{b}.
		\end{cases}
	\end{equation}
	where $\alpha\in(0,2)$, $H=\frac{1}{2}(A+A^T)$, $S=\frac{1}{2}(A-A^T)$.
	
	Combined with the BA-GMRES algorithm, the specific algorithm is as follows.
	
	\begin{algorithm}[H]
		\caption{ADI-BA-GMRES Algorithm}\label{tab:algorithm}
		\begin{algorithmic}[1]
			\STATE Take the initial value $x_0$ and compute the residual $r=b-Ax_0$
			\STATE Set $v_1=r/\left\|r\right\|_2$
			\STATE For $k=1,2,\cdots$ until convergence, do
			\STATE Use the ADI method to solve $Az=v_k$
			\STATE Compute $B^{\ell_k}=(M^{-1}N)^kz_0+\sum_{i=0}^{k-1}(M^{-1}N)^iM^{-1}$, where $A=M-N$, $\ell_{max}$ is the maximum number of iterations obtained by the ADI method to solve $Az=v_k$, $\ell_k=\min\{\ell_{max},\ell\}$, and $\ell$ is the smallest value found to satisfy $\left\|BAv_k-v_k\right\|_2\leqslant\left\|v_k\right\|_2$
			\STATE Solve $z_k=B^{\ell_k}v_k$
			\STATE Compute $w_k=BAz_k$
			\STATE For $i=1,2\cdots,k$, do
			\STATE Set $h{i,k}=w_k^Tv_i$ and $w_k=w_k-h{i,k}v_i$
			\STATE End for
			\STATE Set $h_{k+1,k}=\left\|w_k\right\|_2$ and $v{k+1}=w_k/h_{k+1,k}$
			\STATE End for
			\STATE Set $y_k=\arg\min_{y\in R^k}\left\|\beta e_1-\bar{H}_ky\right\|_2$, $u_k=[v_1,v_2,\cdots,v_k]y_k$, where $\bar{H}_k=\{h_{ij}\}_{1\leqslant i\leqslant k+1;1\leqslant j\leqslant k}$
			\STATE Set $x_k=x_0+u_k$
		\end{algorithmic}
	\end{algorithm}
 
	\subsection{Convergence Analysis of the ADI Method}
	To analyze the convergence of \eqref{eq16}, we eliminate the intermediate vector $x^{k+\frac{1}{2}}$ and rewrite the iteration in the form of a fixed-point equation:
	\begin{equation}\label{eq17}
		x^{k+1}=\mathcal{T}_{\alpha}x^k+c,
	\end{equation}
where $\mathcal{T}_{\alpha}\coloneqq(S+\alpha I)^{-1}(\alpha I-H)(H+\alpha I)^{-1}(\alpha I-S)$ is the iteration matrix of the method and 

$$c\coloneqq(S+\alpha I)^{-1}\left[ (\alpha I-H)(H+\alpha I)^{-1}+I\right]b.$$ 
The fixed-point iteration equation\eqref{eq17} converges for any initial guess $x_0$ and right-hand side $b$ to the solution 
$x=A^{-1}b$ if and only if $\rho(\mathcal{T}_{\alpha})\leq 1$, 
where $\rho(\mathcal{T}_{\alpha})$ denotes the spectral radius of $\mathcal{T}_{\alpha}$.
	
	\begin{theorem}
		Consider the problem \eqref{eq1}.Since $H$ is symmetric,$S$ is skew-symmetric,and $F$ is a non-singular diagonal matrix, the iteration scheme \eqref{eq16} converges unconditionally, i.e., $\rho(\mathcal{T}_{\alpha})<1$ for all $\alpha>0$.
	\end{theorem}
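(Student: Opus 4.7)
The plan is to carry out the classical Peaceman--Rachford splitting argument, reducing the spectral radius of $\mathcal{T}_\alpha$ to a product of two Cayley-transform norms and estimating each one by exploiting the symmetric/skew-symmetric structure.

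First I would use the similarity invariance of the spectrum. Conjugating $\mathcal{T}_\alpha$ by $(S+\alpha I)$ gives
\[
(S+\alpha I)\,\mathcal{T}_\alpha\,(S+\alpha I)^{-1}=(\alpha I-H)(H+\alpha I)^{-1}(\alpha I-S)(S+\alpha I)^{-1},
\]
so $\rho(\mathcal{T}_\alpha)$ equals the spectral radius of the right-hand product. Passing from spectral radius to the spectral norm and using submultiplicativity,
\[
\rho(\mathcal{T}_\alpha)\le\bigl\|(\alpha I-H)(H+\alpha I)^{-1}\bigr\|_2\cdot\bigl\|(\alpha I-S)(S+\alpha I)^{-1}\bigr\|_2.
\]
This decouples the symmetric and skew-symmetric parts, which is the central reduction.

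Next I would estimate each factor separately. For the $H$-factor, since $H$ is symmetric (and, as is standard for ADI convergence, positive definite, inherited from $\hat A=FA$ after the diagonal scaling) I diagonalise $H=U\Lambda U^T$ and observe that $(\alpha I-H)(H+\alpha I)^{-1}=U\,\mathrm{diag}\!\bigl((\alpha-\lambda_i)/(\alpha+\lambda_i)\bigr)\,U^T$; hence its 2-norm equals $\max_i|(\alpha-\lambda_i)/(\alpha+\lambda_i)|<1$ whenever $\alpha>0$ and $\lambda_i>0$. For the $S$-factor I would show that the Cayley transform $U:=(\alpha I-S)(\alpha I+S)^{-1}$ is orthogonal: using $S^T=-S$ and the fact that $\alpha I+S$ and $\alpha I-S$ commute (they are polynomials in $S$),
\[
U^TU=(\alpha I+S)^{-T}(\alpha I-S)^T(\alpha I-S)(\alpha I+S)^{-1}=(\alpha I-S)^{-1}(\alpha I-S)(\alpha I+S)(\alpha I+S)^{-1}=I,
\]
so $\|U\|_2=1$. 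Multiplying the two bounds yields $\rho(\mathcal{T}_\alpha)<1\cdot 1=1$.

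The main obstacle I anticipate is actually a hypothesis issue rather than a computation: the spectral bound for the $H$-factor is strictly below one only when every eigenvalue of $H$ is positive, so I would need to be explicit that after the diagonal preconditioning by $F$ the symmetric part $H=\tfrac12(\hat A+\hat A^T)$ is positive definite (otherwise $(\alpha-\lambda)/(\alpha+\lambda)$ can equal or exceed $1$, or even be undefined when $\lambda=-\alpha$). A secondary, purely technical worry is making the Cayley-transform identity genuinely unconditional in $\alpha>0$, which requires noting that $\alpha I+S$ is always invertible because the eigenvalues of $S$ are purely imaginary and $\alpha$ is real and positive; once this is in place the argument goes through for every $\alpha>0$ as claimed.
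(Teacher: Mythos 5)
Your proposal is correct and follows essentially the same route as the paper: similarity transformation to decouple the two Cayley factors, submultiplicativity of the spectral norm, orthogonality of the skew-symmetric factor via $S^T=-S$, and the bound $\max_i|(\alpha-\lambda_i)/(\alpha+\lambda_i)|<1$ for the symmetric factor under $\lambda_i>0$. Your remark that the conclusion genuinely requires the symmetric part $H$ of the preconditioned matrix to be positive definite is a fair point — the paper's proof also silently invokes $\lambda_i\in\lambda(H)>0$ without establishing it from the stated hypotheses.
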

	
	\begin{proof}
		Note that for any $\alpha>0$, $S+\alpha I$ and $H+ \alpha I$ are non-singular matrices. Thus, $\mathcal{T}_{\alpha}$ is similar to
		$$\hat{\mathcal{T}}_{\alpha}\coloneqq(\alpha I-H)(\alpha I+H )^{-1}(\alpha I-S)(\alpha I+S)^{-1}.$$
		Then, we have
		\begin{equation}\begin{aligned}
			\rho(\mathcal{T}_{\alpha})=&\rho((S + \alpha I)^{-1}(\alpha I-H)(H + \alpha I)^{-1}(\alpha I-S))\\
			\leqslant&\rho((\alpha I-H)(\alpha I+H )^{-1}(\alpha I-S)(\alpha I+S)^{-1})\\
			\leqslant&\left\|(\alpha I-H)(\alpha I+H )^{-1}\right\|_2\left\|(\alpha I-S)(\alpha I+S)^{-1}\right\|_2,
		\end{aligned}\end{equation}
		where $\left\|\cdot\right\|_2$ denotes the spectral norm.
		
		Since $S$ is skew-symmetric, we have $S^T=-S$. Let $S_{\alpha}=(\alpha I-S)(\alpha I+S)^{-1})$, then we have
		\begin{equation}\begin{aligned}
			S_{\alpha}^TS_{\alpha}&=(\alpha I-S)^{-1}(\alpha I+S)(\alpha I-S)(\alpha I+S)^{-1}\\
			&=(\alpha I-S)^{-1}(\alpha I-S)(\alpha I+S)(\alpha I+S)^{-1}\\&=I.
		\end{aligned}\end{equation}
		Hence, we can see that $\left\|(\alpha I-S)(\alpha I+S)^{-1}\right\|_2=1$. Then, by using $\lambda_i\in\lambda(H)>0,i=1,2,\cdots,n$ and $\alpha>0$, we have
		\begin{equation}\rho(\mathcal{T}_{\alpha})\leqslant\left\|(\alpha I-H)(\alpha I+H )^{-1}\right\|_2=\max_{\lambda_i\in\lambda(H)}\left\|\frac{\alpha-\lambda_i}{\alpha+\lambda_i}\right\|<1.\end{equation}
		Therefore, the theorem is proved.
\end{proof}

\section{Kaczmarz-BA-GMRES Method}
\subsection{Introduction to Kaczmarz Method}
The Kaczmarz algorithm is an iterative method for solving linear systems $Ax=b$. It has a linear convergence rate and requires only $\mathcal{O}(n)$ operations and storage per iteration. In each iteration $k$, the algorithm selects a row $r_{i_k}\in[m]$ from the coefficient matrix and performs an orthogonal projection of the current estimate $x^k$ onto the hyperplane defined by the equation $a_{i_k}^Tx =b_{i_k}$:
\begin{equation}\min_{x}\left\|x-x^k\right\|^2,\quad s.t\quad a_{i_k}^Tx=b_{i_k}.\end{equation}
Then, the algorithm chooses the next iteration point along the line connecting the current estimate and the projection. Geometrically, this corresponds to selecting a plane defined by one row of the coefficient matrix, finding the orthogonal projection of the current estimate onto this plane, and then finding the intersection of this line with another plane defined by a different row of the coefficient matrix. As more equations are added, the feasible region shrinks.

The basic Kaczmarz iteration formula is given by
\begin{equation}x^{k+1}=x_k+\alpha\frac{a_i^Tx_k-b_i}{a_i^Ta_i}a_i,\end{equation}
where $a_i$ is the $i$-th column of $A$. The choice of rows is a key factor in the algorithm. In this paper, we consider cyclic and random selection of rows. The results obtained depend on which set of rows $J\subseteq[m]$ is selected. Some rows may be more important than others, for example, in the case where $T\subseteq[m]$ such that $\mathcal{X}={x\in R^n:A_Tx=b_T}$, where $A_T$ denotes the submatrix of $A$ with rows indexed by $T$. In this case, the rows in $T$ are more important than those not in $T$. If there are multiple sets $T$, then the rows of the matrix $A_T$ with the best conditions are more important than other rows. We extend the basic Kaczmarz method by introducing random row selection and varying step sizes to obtain faster convergence rates.

\textbf{Random Kaczmarz (RK) method:} The row selection probability is defined as $P(i=i_p)=\frac{\left\|a_{i_p}\right\|^2}{\left\|A\right\|_F^2}$, where $i_p\in[m]$. The row with the highest probability is selected first. When $A$ is fixed, rows with more nonzero elements have a higher probability of being selected. The algorithm proceeds as follows.

\begin{algorithm}
	\caption{Random Kaczmarz method}\label{RK algorithm}
	\begin{algorithmic}[1]
		\STATE  Choose an initial guess $x_0$
		\STATE	 For $p=0,1,2,\cdots,\ell-1$\\
				\quad a) Select $i_p\in\{1,2,\cdots,m\}$ with probability $Pr(row=i_p)=\frac{\left\|\alpha_{i_p}\right\|_2^2}{\left\|A\right\|_F^2}$\\
			\quad b) Update $x^{k+1}=x^k-w \frac{v_{i_p}-\alpha_{i_p}^Tx^{(p)}}{\left\|\alpha_{i_p}\right\|_2^2}\alpha_{i_p}$
		
		\STATE End for
	\end{algorithmic}
\end{algorithm}

\subsubsection{Constant Step Size for Random Kaczmarz Method}

In the Random Kaczmarz method, we can use a step size parameter to adjust the numerical value of each iteration. First, we can think of adding a step size $\alpha$ to each iteration based on the idea of gradient descent, which can control the value of each iteration. At the same time, when selecting rows, we do not adopt the method of selecting consecutive rows, but use the method of randomly selecting rows, in which the step size $\alpha=1$. It can be proved that this step size is the optimal step size.
\begin{theorem}{\rm\cite{refP32}}
According to the following theorem, it can be seen that this iterative method also has convergence, and the expected convergence rate is

$$E\left[\left\|x^{k+1}-x^*\right\|^2\right]\leqslant(1-\frac{\lambda_{min}^{nz}(A^TA)}{\left\|A\right\|_F})^k\left\|x^0-x^*\right\|^2,$$ where $\lambda_{min}^{nz}(\cdot)$ represents the minimum non-zero eigenvalue of the given matrix.\end{theorem}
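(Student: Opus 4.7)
The plan is to adapt the classical Strohmer--Vershynin argument for randomized Kaczmarz with unit step size. The target inequality is a one-step contraction in expectation, iterated $k$ times. I would first rewrite one Kaczmarz update using the fact that the exact solution $x^{*}$ satisfies $a_{i_p}^{T}x^{*}=b_{i_p}$ for every row index, so that with $\alpha=1$ the update becomes
\begin{equation*}
x^{k+1}-x^{*}=\left(I-\frac{a_{i_p}a_{i_p}^{T}}{\|a_{i_p}\|_{2}^{2}}\right)(x^{k}-x^{*}).
\end{equation*}
The operator in parentheses is an orthogonal projector, so by the Pythagorean identity
\begin{equation*}
\|x^{k+1}-x^{*}\|_{2}^{2}=\|x^{k}-x^{*}\|_{2}^{2}-\frac{\bigl(a_{i_p}^{T}(x^{k}-x^{*})\bigr)^{2}}{\|a_{i_p}\|_{2}^{2}}.
\end{equation*}

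Next I would take the conditional expectation with respect to the random row index $i_p$, using the sampling distribution $\Pr(i_p=i)=\|a_{i}\|_{2}^{2}/\|A\|_{F}^{2}$ prescribed by Algorithm \ref{RK algorithm}. The $\|a_{i_p}\|_{2}^{2}$ factor in the denominator cancels the one coming from the probability weights, leaving
\begin{equation*}
E\bigl[\|x^{k+1}-x^{*}\|_{2}^{2}\mid x^{k}\bigr]
=\|x^{k}-x^{*}\|_{2}^{2}-\frac{\|A(x^{k}-x^{*})\|_{2}^{2}}{\|A\|_{F}^{2}}.
\end{equation*}
This clean identity is really the heart of the argument; the rest is a Rayleigh-quotient estimate.

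I would then bound $\|A(x^{k}-x^{*})\|_{2}^{2}$ from below by $\lambda_{\min}^{nz}(A^{T}A)\,\|x^{k}-x^{*}\|_{2}^{2}$. Substituting gives the one-step contraction
\begin{equation*}
E\bigl[\|x^{k+1}-x^{*}\|_{2}^{2}\mid x^{k}\bigr]\leqslant\left(1-\frac{\lambda_{\min}^{nz}(A^{T}A)}{\|A\|_{F}^{2}}\right)\|x^{k}-x^{*}\|_{2}^{2}.
\end{equation*}
Taking total expectation and iterating the inequality $k$ times (using the tower property so that each step multiplies by the same factor) yields the claimed bound.

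The main obstacle is justifying the Rayleigh-quotient step when $A$ is rank deficient: the naive inequality $\|Ay\|_{2}^{2}\geq\lambda_{\min}(A^{T}A)\|y\|_{2}^{2}$ fails when $\ker(A)\neq\{0\}$, which is precisely why the statement uses $\lambda_{\min}^{nz}$. To close this gap I would observe that each Kaczmarz update adds a multiple of some row $a_{i_p}\in\mathcal{R}(A^{T})$, so if $x^{0}-x^{*}\in\mathcal{R}(A^{T})$ then $x^{k}-x^{*}\in\mathcal{R}(A^{T})$ for every $k$ by induction; a standard choice is $x^{0}=0$ together with taking $x^{*}$ to be the minimum-norm solution. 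On this subspace $A^{T}A$ acts invertibly with smallest eigenvalue $\lambda_{\min}^{nz}(A^{T}A)$, legitimizing the lower bound and closing the proof.
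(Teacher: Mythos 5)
Your proposal follows essentially the same route as the paper's proof: expand the one-step error (the paper does this for general $\alpha\in(0,2)$ and optimizes $\alpha(2-\alpha)$ at $\alpha=1$, while you fix $\alpha=1$ and read the update as an orthogonal projection), take expectation under the row-sampling distribution to produce $\|A(x^{k}-x^{*})\|_{2}^{2}/\|A\|_{F}^{2}$, and apply the Courant--Fischer bound on $\mathcal{R}(A^{T})$ before iterating. Your version is in fact slightly more careful than the paper's, since you justify $x^{k}-x^{*}\in\mathcal{R}(A^{T})$ for the use of $\lambda_{\min}^{nz}$ and invoke the tower property explicitly, and your derivation yields the standard denominator $\|A\|_{F}^{2}$ where the theorem statement (and the paper's final display) write $\|A\|_{F}$, which appears to be a typo.
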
 

\begin{proof}
    
Consider $(x-x^*, (a_i^Tx-b_i)a_i)=(a_i^Tx-b_i)^2,$ in the interval $(0,2)$, for all $x^*$, where $Ax=b$. We get:

$$\begin{aligned}\left\|x^{k+1}-x^*\right\|^2&=\left\|x^k+\alpha\frac{(a_i^Tx^k-b_i)}{\left\|a_i\right\|}a_i-x^*\right\|^2\\&=\left\|x^k-x^*\right\|^2-
2\alpha\frac{(a_i^Tx^k-b_i)^2}{\left\|a_i\right\|^2}+\alpha^2\frac{(a_i^Tx^k-b_i)^2}{\left\|a_i\right\|^2}\\&
=\left\|x^k-x^*\right\|^2-\alpha(2-\alpha)\frac{(a_i^Tx^k-b_i)^2}{\left\|a_i\right\|^2}.\end{aligned}$$

Since the expectation formula is $E(X)=\sum_{i}p_ix_i$, where $X=(x_1,x_2,\cdots)$, the convergence of this algorithm is \begin{equation}\begin{aligned}\label{equ2}
	E\left[\left\|x^{k+1}-x^*\right\|^2\right]
	&=E\left(\left\|x^k-x^*\right\|^2-\alpha(2-\alpha)\frac{(a_i^Tx^k-b_i)^2}{\left\|a_i\right\|^2}\right)\\
	&=E\left(\left\|x^k-x^*\right\|^2\right)-\alpha(2-\alpha)E\left(\frac{(a_i^Tx^k-b_i)^2}{\left\|a_i\right\|^2}\right)\\ &=E\left(\left\|x^k-x^*\right\|^2\right)-\alpha(2-\alpha)\sum_i\frac{\left\|a_{i}\right\|^2}{\left\|A\right\|_F^2}\frac{(a_i^Tx^k-b_i)^2}{\left\|a_i\right\|^2}\\
	&=E\left(\left\|x^k-x^*\right\|^2\right)-\alpha(2-\alpha)\sum_i\frac{(a_i^Tx^k-b_i)^2}{\left\|A\right\|_F^2}\\
	&\leqslant\left\|x^k-x^*\right\|^2-\frac{\alpha(2-\alpha)}{\left\|A\right\|_F^2}\left\|Ax^k-b\right\|_F^2.\\\end{aligned}\end{equation}

By the Courant-Fischer theorem, we have for all matrices that

$$\left\|Ax\right\|^2\geqslant \lambda_{min}^{nz}(AA^T)\left\|x\right\|^2,$$
for all $x\in range(A^T)$. Then we can obtain

$$\left\|Ax^k-b\right\|^2=\left\|A(x^k-x_k^*)\right\|^2\geqslant\lambda_{min}^{nz}(AA^T)\left\|x^k-x^*\right\|^2.$$

Substituting the above equation yields:

$$E\left[\left\|x^{k+1}-x^*\right\|^2\right]
\leqslant(1-\frac{\alpha(2-\alpha)\lambda_{min}^{nz}(AA^T)}{\left\|A\right\|_F^2})\left\|x^k-x^*\right\|^2\leqslant(1-\frac{\lambda_{min}^{nz}(A^TA)}{\left\|A\right\|_F})^k\left\|x^0-x^*\right\|^2.$$

The term $\alpha(2-\alpha)$ in the above equation can be derived to obtain $\alpha=1(i.e.,\alpha=argmax_{\alpha}\alpha(2-\alpha))$ as the optimal choice (which achieves the smallest upper bound). This is also the basic Kaczmarz iteration method.\end{proof} 

Meanwhile, we can also know that $\alpha\in (0,2)$. In the next section, we prove that the correct choice of step size $\alpha_k>2$ can greatly accelerate the convergence rate.

	\subsubsection{Adaptive Step of Random Kaczmarz Method}
	 \cite{refP32}Extrapolation refers to the use of step size
$\alpha_k\geq2$ to move further along the straight line connecting the last iteration and the projected mean. For example, because the original iteration process could be slow, a variant of extrapolation was introduced with the following adaptive selection for step $\alpha_k$ $$\alpha_k\simeq 2\frac{\sum _{i\in J_k}\bar{w}_i(a_i^Tx^k-b_i)^2}{\left\|\sum_{i\in J_k}\bar{w}_i(a_i^Tx^k-b_i)a_i\right\|^2},$$ where we use the symbol $\bar{w}_i=w_i/\left\|a_i\right\|^2$ For convenience we define $0/0=1$, from Jenssen's inequality we can derive that $\alpha_k\geq 2$. 
$\alpha_k$ can be much greater than 2. In addition, sequences generated using an iterative process of an adaptive step k using extrapolation $x_k$ usually converge much faster than the same sequence $x_k$ generated in the underlying Kaczmarz method $\alpha_k\in(0,2)$.

We extrapolate the step size $(\alpha>2)$ and assign a weight to each row $w_k^i=w_i$ (the weight of the i-th row of the kth iteration), so the final iteration format is $$x^{k+1}=x^k-\alpha\left(\sum w_i\frac{a_i^Tx^k-b_i}{\left\|a_i\right\|^2}a_i\right), $$ where the weights of all rows are added to 1, and $0<
w_{min}\leqslant w_{max}<1$.

Define $\lambda_{max}^{block}=\max\lambda_{max}(A^Tdiag(\frac{1}{\left\|a_i\right\|^2})A),$
Starting from basic linear algebra, we obtain that the sum of the eigenvalues of a symmetric positive semidefinite matrix is equal to its trace. Therefore, if the rank of a matrix is at least 2, then its maximum eigenvalue is strictly less than that of the trace.  When we choose a random variable, make all the samples satisfy $|J|=\tau,\tau\in[1,m]$($\tau$  i.e. $|J|$ is the trace of the block matrix), then $\lambda_{max}^{block}<\tau,rank(A_J)\geq 2.$ Therefore, in this case, we use the overslack (extrapolation) step size, because usually $2w_{min}/w^2_{max}\lambda_{max}^{block}>2$. For example, for $w_i=1/\tau$, we get $2\tau/\lambda_ {max}^{block}>2$. Define the positive semidefinite matrix W as $$W=E_J\left[\sum_{i\in J}\frac{a_ia_i^T}{\left\|a_i\right\|^2}\right]=\sum_{[m]}p_i\frac{a_ia_i^T}{\left\|a_i\right\|^2}=\sum_{i\in[m]}p_i\frac{a_ia_i^T}{\left\|a_i\right\|^2}=A^Tdiag\left(\frac{p_i}{ \left\|a_i\right\|^2},i\in [m]\right)A.$$ The next theorem proves the convergence speed of the new algorithm with adaptive steps, which explicitly depends on the geometric properties of the system matrix A and its submatrices $A_J$. We denote the random average block Kaczmarz algorithm with adaptive step size as the $RABK$ algorithm.

	\begin{theorem}{\rm\cite{refP32}}
The sequence produced by the RaBK algorithm $\{x^k\}_{k\geqslant0}$ satisfies the weight $0<w_{min}\leqslant w_i\leqslant w_{max} <1$, for all $i\in[m]$, and the step size $\alpha=\frac{(2-\delta)w_{min}}{w^2_{max}\lambda_{max}^{block}}$ for all $\delta\in(0,1])$, then there is the following conclusion 
$$
E\left[\left\|x^k-x^*\right\|^2\right]\leqslant\left(1-\frac{(2-\delta)w^2_{min}\lambda_{min}^{nz}(W)}{w^2_{max}\lambda_{max}^{block}}\right)^k\left\|x^0-x^*\right\|^2.
$$
\end{theorem}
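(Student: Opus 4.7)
The plan is to run a one-step energy analysis on the error $e_k := x^k - x^*$, calibrate $\alpha$ so the quadratic-in-$\alpha$ part collapses cleanly, and then take conditional expectation to bring in the averaged operator $W$. Since $a_i^T x^* = b_i$, the RaBK update rewrites as $e_{k+1} = (I - \alpha B_{J_k})e_k$ with $B_{J_k} := \sum_{i \in J_k} w_i \tfrac{a_i a_i^T}{\|a_i\|^2}$, so the standard expansion gives
\begin{equation*}
\|e_{k+1}\|^2 \;=\; \|e_k\|^2 - 2\alpha\, e_k^T B_{J_k} e_k + \alpha^2 \|B_{J_k} e_k\|^2 .
\end{equation*}

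The crux is a deterministic spectral sandwich that makes the quartic term proportional to the quadratic one. Writing $M_{J_k} := \sum_{i \in J_k} \tfrac{a_i a_i^T}{\|a_i\|^2}$, I would chain the two-sided bound $w_{\min} M_{J_k} \preceq B_{J_k} \preceq w_{\max} M_{J_k}$ with the PSD identity $M_{J_k}^2 \preceq \lambda_{\max}(M_{J_k})\, M_{J_k} \preceq \lambda_{\max}^{\text{block}} M_{J_k}$ (the latter inequality being exactly the definition of $\lambda_{\max}^{\text{block}}$). Composing these three steps in the right order yields $B_{J_k}^2 \preceq \tfrac{w_{\max}^2 \lambda_{\max}^{\text{block}}}{w_{\min}} B_{J_k}$, hence $\|B_{J_k} e_k\|^2 \leq \tfrac{w_{\max}^2 \lambda_{\max}^{\text{block}}}{w_{\min}} e_k^T B_{J_k} e_k$. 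Substituting back,
\begin{equation*}
\|e_{k+1}\|^2 \;\leq\; \|e_k\|^2 - \alpha\Bigl(2 - \alpha\, \tfrac{w_{\max}^2 \lambda_{\max}^{\text{block}}}{w_{\min}} \Bigr) e_k^T B_{J_k} e_k ,
\end{equation*}
and the prescribed step $\alpha = \tfrac{(2-\delta)w_{\min}}{w_{\max}^2 \lambda_{\max}^{\text{block}}}$ is calibrated precisely so that the bracketed factor simplifies to a positive constant depending only on $\delta$.

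The last step is to take conditional expectation given $x^k$. Because the weights $w_i$ do not depend on the realization of $J_k$, we have $E[B_{J_k}] = \sum_{i\in[m]} w_i p_i \tfrac{a_i a_i^T}{\|a_i\|^2} \succeq w_{\min} W$ by $w_i \geq w_{\min}$. A short induction confirms that each increment $B_{J_k}e_k$ lives in $\mathcal{R}(A^T) = \mathcal{R}(W)$, so the restricted Rayleigh bound $e_k^T W e_k \geq \lambda_{\min}^{nz}(W) \|e_k\|^2$ applies. Combining these pieces produces the one-step contraction with the claimed contraction constant, and unrolling via the tower property of expectation completes the proof. The main obstacle I anticipate is the ordering inside the spectral sandwich: the collapse of $M_{J_k}^2$ to $M_{J_k}$ using the deterministic constant $\lambda_{\max}^{\text{block}}$ has to be interleaved between the two weight substitutions $B_{J_k} \preceq w_{\max} M_{J_k}$ and $M_{J_k} \preceq B_{J_k}/w_{\min}$, otherwise one loses either an essential $w_{\min}/w_{\max}$ factor or the clean deterministic bound, and the $\alpha$-calibration no longer produces the stated rate.
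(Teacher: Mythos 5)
Your architecture matches the paper's proof almost step for step: expand $\left\|e_{k+1}\right\|^2$ into terms linear and quadratic in $\alpha$, bound the linear term from below via $E[B_{J_k}]\succeq w_{\min}W$, control the quadratic term through $\lambda_{\max}^{block}$, substitute the prescribed $\alpha$, and finish with the Courant--Fischer bound $e_k^TWe_k\geqslant\lambda_{\min}^{nz}(W)\left\|e_k\right\|^2$ on $\mathcal{R}(A^T)$. The one genuine difference is where the quadratic term is tamed: the paper takes expectations of both terms separately and uses $E[B_{J_k}^2]\preceq w_{\max}^2\lambda_{\max}^{block}W$ (via $Q^2\preceq\lambda_{\max}(Q)Q$ inside the expectation), whereas you absorb $\left\|B_{J_k}e_k\right\|^2$ into $e_k^TB_{J_k}e_k$ pathwise, before any expectation is taken, so that only one expectation bound is needed at the end. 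Both routes are workable.

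Two caveats. First, the justification of your deterministic sandwich is not sound as stated: composing $B_{J_k}\preceq w_{\max}M_{J_k}$ with $M_{J_k}^2\preceq\lambda_{\max}^{block}M_{J_k}$ and $M_{J_k}\preceq B_{J_k}/w_{\min}$ requires squaring or multiplying Loewner inequalities, and $A\preceq B$ does not imply $A^2\preceq B^2$ for non-commuting positive semidefinite matrices ($B_{J_k}$ and $M_{J_k}$ commute only when all $w_i$ coincide). The repair is cheap and yields a sharper constant: $B_{J_k}^2\preceq\lambda_{\max}(B_{J_k})B_{J_k}\preceq w_{\max}\lambda_{\max}^{block}B_{J_k}$, using only $Q^2\preceq\lambda_{\max}(Q)Q$ and the monotonicity of $\lambda_{\max}$ under $\preceq$. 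Second, with your calibration the bracketed factor is $2-\alpha C=\delta$, so the one-step decrement is $\alpha\delta\,e_k^TB_{J_k}e_k$; since $\alpha$ itself carries the factor $(2-\delta)$, you arrive at the rate $1-\delta(2-\delta)w_{\min}^2\lambda_{\min}^{nz}(W)/(w_{\max}^2\lambda_{\max}^{block})$ rather than the stated $1-(2-\delta)(\cdots)$. You should not be troubled by this discrepancy: the paper's own quantity $2\alpha w_{\min}-\alpha^2 w_{\max}^2\lambda_{\max}^{block}$ also evaluates to $\delta(2-\delta)w_{\min}^2/(w_{\max}^2\lambda_{\max}^{block})$, and its penultimate display silently drops the factor $\delta$, so neither derivation reaches the literal constant in the theorem unless $\delta=1$. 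Your remark that $e_k\in\mathcal{R}(A^T)$ requires an induction (hence a hypothesis on $x^0-x^*$, e.g.\ $x^0=0$ with $x^*$ the minimum-norm solution) is correct and is likewise glossed over in the paper.
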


\begin{proof}Below we give proof of it.
	
	First we have $Ax^*=b$, then	$$\begin{aligned}
		\left\|x^{k+1}-x^*\right\|^2&=\left\|x^k-x^*-\alpha\left(\sum w_i\frac{a_i^Tx^k-b_i}{\left\|a_i\right\|^2}a_i\right)\right\|^2\\
		&=\left\|x^k-x^*-\alpha\left(\sum w_i\frac{a_i^Tx^k-a_i^Tx^*}{\left\|a_i\right\|^2}a_i\right)\right\|\\
		&=\left\|x^k-x^*-\alpha\left(\sum w_i\frac{a_i^T}{\left\|a_i\right\|^2}(x^k-x^*)a_i\right)\right\|^2\\
		&=\left\|(I_n-\alpha\left(\sum w_i\frac{a_i^Ta_i}{\left\|a_i\right\|^2}\right)(x^k-x^*)\right\|^2\\
		&=(x^k-x^*)^T\left(I_n-2\alpha\sum w_i\frac{a_ia_i^T}{\left\|a_i\right\|^2}+\alpha^2\left(\sum w_i\frac{a_ia_i^T}{\left\|a_i\right\|^2}\right)^2\right)(x^k-x^*),\\
	\end{aligned}$$where $a_ia_i^Tx=a_i^Txa_i$, $a_i^T$, $x$ is the column vector.
And
$$\begin{aligned}
	E\left[\sum w_i\frac{a_ia_i^T}{\left\|a_i\right\|^2}\right]&\succeq(minw_i)E\left[\sum \frac{a_ia_i^T}{\left\|a_i\right\|^2}\right]=w_{min}\sum p_i\frac{a_ia_i^T}{\left\|a_i\right\|^2}\\
	&=w_{min}A^Tdiag\left(\frac{p_i}{\left\|a_i\right\|^2}A\right)=w_{min}W,
\end{aligned}$$
then
$$E\left[\sum w_i\frac{a_ia_i^T}{\left\|a_i\right\|^2}\right] \succeq w_{min}W.$$
Also, for some $Q\succeq0,$ we have $Q^2\succeq\lambda_{max}(Q)Q,$ then we can get the expected upper bound $$\begin{aligned}
	E\left[\left(w_i\frac{a_ia_i^T}{\left\|a_i\right\|^2}\right)^2\right]
	&\preceq E\left[\lambda_{max}\left(\sum w_i\frac{a_ia_i^T}{\left\|a_i\right\|^2}\right)\left(\sum w_i\frac{a_ia_i^T}{\left\|a_i\right\|^2}\right)\right]\\
	&\preceq(max(w_i))E\left[\lambda_{max}\left(\sum \frac{a_ia_i^T}{\left\|a_i\right\|^2}\right)\left(\sum w_i\frac{a_ia_i^T}{\left\|a_i\right\|^2}\right)\right]\\
	&\preceq(max (w_i))E\left[\lambda_{max}\left(A^Tdiag(\frac{1}{\left\|a_i\right\|^2})A\right)
	\left(\sum w_i\frac{a_ia_i^T}{\left\|a_i\right\|^2}\right)\right]\\
	&\preceq(max(w_i)\lambda_{max}^{block}E\left[\sum\frac{a_ia_i^T}{\left\|a_i\right\|^2}\right]=w_{max}^2\lambda_{max}^{block}W.\\
\end{aligned}$$
We are $\left\|x^k-x^*\right\|^2=(x^k-x^*)^T\left(I_n-2\alpha\sum w_i\frac{a_ia_i^T}{\left\|a_i\right\|^2}+\alpha^2\left(\sum w_i\frac{a_ia_i^T}{\left\|a_i\right\|^2}\right)^2\right)(x^k-x^*)$ takes the expectation on both sides, then there are   $$\begin{aligned}E\left[\left\|x^k-x^*\right\|^2\right]&=E\left[(x^k-x^*)^T\left(I_n-2\alpha\sum w_i\frac{a_ia_i^T}{\left\|a_i\right\|^2}+\alpha^2\left(\sum w_i\frac{a_ia_i^T}{\left\|a_i\right\|^2}\right)^2\right)(x^k-x^*)\right]\\
	&=(x^k-x^*)^TE\left[\left(I_n-2\alpha\sum w_i\frac{a_ia_i^T}{\left\|a_i\right\|^2}+\alpha^2\left(\sum w_i\frac{a_ia_i^T}{\left\|a_i\right\|^2}\right)^2\right)\right](x^k-x^*)\\
	&\leqslant(x^k-x^*)^T\left(I_n-2\alpha w_{min}W+\alpha^2w^2_{max}\lambda_{max}^{block}W\right)(x^k-x^*).\\
\end{aligned}$$
We need to let $2\alpha w_{min}W-\alpha^2w^2_{max}\lambda_{max}^{block}W$ maximum,then $\alpha$ can take the value: $\alpha^*=\frac{w^2_{min}}{w^2_{max}\lambda_{max}^{block}}.$ we choose the step size  $\alpha=(2-\delta)w_{min}/w^2_{max}\lambda_{max}^{block}$, for all $\delta\in(0,1]$, then we can get $$E\left[x^{k+1}-x^*\right]\leqslant(x^k-x^*)^T\left(I_n-(2-\delta)\frac{w^2_{min}}{w^2_{max}\lambda_{max}^{block}W}\right)(x^k-x^*).$$
Also by the $Courant-Fisher$ theorem $$\left\|Ax\right\|^2\geqslant\lambda_{min}^{nz}(AA^T)\left\|x\right\|^2,$$ for all $x\in range(A^T)$. In addition we have $x-\pi_{x}(x)\in range(A^T)$.  let $D=diag(\frac{p_i}{\left\|a_i\right\|^2})$  non-singular, then $W=A^TDA$.

$$\begin{aligned}
	(x^k-x^*)^TW(x^k-x^*)&=\left\|D^{\frac{1}{2}}A(x^k-x^*)\right\|^2\geqslant\lambda_{min}^{nz}(A^TDA)\left\|x^k-x^*\right\|^2\\&=\lambda_{min}^{nz}(W)\left\|x^k-x^*\right\|^2\\
\end{aligned}$$

For the above formula,  we have:$$\begin{aligned}
	E\left[\left\|x^{k+1}-x^*\right\|\right]&\leqslant(x^k-x^*)^T\left(I_n-(2-\delta)\frac{w^2_{min}}{w^2_{max}\lambda_{max}^{block}W}\right)(x^k-x^*)\\
	&\leqslant\left(1-(2-\delta)\frac{w_{min}^2}{w^2_{max}\lambda_{max}^{block}}\lambda_{min}^{nz}(W)\right)\left\|x^k-x^*\right\|^2.\\
 &\leqslant\left(1-\frac{(2-\delta)w^2_{min}\lambda_{min}^{nz}(W)}{w^2_{max}\lambda_{max}^{block}}\right)^k
\left\|x^0-x^*\right\|^2.
\end{aligned}$$

\end{proof}

\begin{algorithm}[H]
	\caption{Adaptive Step Kaczmarz algorithm flow}
	\begin{algorithmic}[1]
		\STATE	Take the initial value of $x_0$
		\STATE	calculate $\alpha_k=(2-\delta)L_k$,
		$$L_k=\begin{cases}
			\frac{\sum_{i\in J_k}\bar{w}_i^k(a_i^Tx^k-b_i)^2}{\left\|\sum_{i\in J_k}\bar{w}_i^k(a_i^Tx^k-b_i)a_i\right\|^2} \quad if\;\;i\in J_k\;s.t.a_i^Tx_k-b_i\neq0,\\
			\frac{1}{\lambda_{max}(A_{J_k}^Tdiag(\bar{w}_i^k,i\in J_k)A_{J_k})}\quad otherwise.
		\end{cases}$$
		and $w_i=1/\tau$, $\bar{w}_i=\frac{w_i}{\left\|a_i\right\|_2},$ where $\tau$  is the sum of the diagonal elements representing the matrix
		\STATE	for $k=1,2\cdots$ do
	$$x^{k+1}=x^k-\alpha_k (\sum w_i\frac{a^Tx^k-b_i}{\left\|a_i\right\|^2}a_i)$$
		\STATE	end for
	\end{algorithmic}
\end{algorithm}

\section{Complexity Analysis}
We combine the three internal iteration methods (RPCG, ADI, Kaczmarz) above to combine the methods of the BA-GMRES algorithm as the NM-BA-GMERES method. For the above algorithm, we can get the algorithm flowchart as:

\begin{algorithm}[H]
	\caption{NM-BA-GMRES algorithm flow}\label{tab:算法流程}
	\begin{algorithmic}[1]
		\STATE	Take the initial value $x_0$, $r=b-Ax_0$
		\STATE	$v_1=r/\left\|r\right\|_2$	
		\STATE	for $k=1,2\cdots$ until convergence do
		\STATE   Apply inner iterative methods to solve $Az=v_k$
		\STATE	Compute $B^{\ell_k}=(M^{-1}N)^kz_0+\sum_{i=0}^{k-1}(M^{-1}N)^iM^{-1}$
		where $A=M-N$,$\ell_{max}$ is the maximum number of iterations obtained by solving $Az=v_k$ for three method, $\ell_k=\min\{l_{max},l\}.$,$\ell$ makes $\left\|BAv_k-v_k\right\|_2\leqslant\left\|v_k\right\|_2$ minimal
		\STATE	Solve $z_k=B^{\ell_k}v_k$
		\STATE	 $w_k=BAz_k$
		\STATE	for i=1,2$\cdots,k$,do
		\STATE	$h_{i,k}=w_k^Tv_i,w_k=w_k-h_{i,k}v_i$
		\STATE	end for
		\STATE	$h_{k+1,k}=\left\|w_k\right\|_2,v_{k+1}=w_k/h_{k+1,k}$
		\STATE	end for
		\STATE	$y_k=argmin_{y\in R^k}\left\|\beta e_1-\bar{H}_ky\right\|_2,$
		\quad$u_k=[v_1,v_2,\cdots,v_k]y_k,$
		\quad where$\bar{H}_k=\{h_{ij}\}_{1\leqslant i\leqslant k+1;1\leqslant j\leqslant k}$
		\STATE	$x_k=x_0+u_k$
	\end{algorithmic}
\end{algorithm}

Let the coefficient matrix be 
$A\in \mathbb{R}^{n\times n}$. From the algorithm described above, it is evident that the complexity of the algorithm mainly lies in solving 
$Az=v_k$,
  using inner iteration methods, preprocessing matrix 
$B^{l_k}$
 , and the Arnoldi process in GMRES for solving $$y_k=argmin_{y\in R^k}\left\|\beta e_1-\bar{H}_ky\right\|_2.$$
 
The complexity of Step 1 is reflected in matrix-vector multiplication, hence the complexity is 
$\mathcal{O}(mn)$;
The complexity of Step 2 lies in vector normalization, involving division and norm calculations, with a complexity of 
$\mathcal{O}(n)$;
If Step 4 adopts the Restricted Preconditioned Conjugate Gradient (RPCG) method and assuming convergence within 
$l_k$ steps, calculating the time complexity from Step 3 to Step 4: Since the RPCG method iterates 
$l_{max}$ times, the time complexity is 
$\mathcal{O}(l_{max}mn)$. Therefore, the total time complexity is 
$\mathcal{O}(l_{max}mnl_k)$.

If Step 4 uses Alternating Direction Iteration and assuming convergence within 
$l_k$
  steps, calculating the time complexity from Step 3 to Step 4: When using Alternating Direction Iteration, solving two linear equation systems is required per iteration, totaling 
$2l_k$ linear equation solves. The time complexity of solving an 
$m\times n$ linear equation system each time is 
$\mathcal{O}(mn^2)$, thus the total time complexity of Alternating Direction Iteration is 
$\mathcal{O}(l_kmn^2)$.

If Step 4 employs the Kaczmarz method, the time complexity is 
$\mathcal{O}(l_kl_{max}n)$, where 
$l_k$ represents the number of iterations in the GMRES algorithm. In the Kaczmarz method, each iteration processes all equations, requiring 
$n$ multiplications in total, and since the algorithm converges in 
$l_{max}$
  iterations, the time complexity per iteration is 
$\mathcal{O}(l_{max}n)$.

Step 5 computes 
$B_{l_k}$,
  with a time complexity of 
$\mathcal{O}(l_kn^2)$;
Step 6 solves 
$z_k=B^{l_k}v_k$,
  with a time complexity of 
$\mathcal{O}(l_kn)$;
Step 7 calculates 
$w_k=BAz_k$
  with a time complexity of 
$\mathcal{O}(n^2)$;
Steps 8 to 12 involve loop operations, with a total time complexity of 
$\mathcal{O}(l_k^2n)$;
Step 13: Solving 
$y_k=argmin_{y\in R^k}\left\|\beta e_1-\bar{H}_ky\right\|_2, $ has a time complexity of 
$\mathcal{O}(l_k^2)$;
Step 14: Computing 
$x_k=x_0+u_k$,
  takes 
$\mathcal{O}(l_kn)$; time.
Considering the time complexities of each step, the overall time complexity of the NM-BA-GMRES algorithm is given by $$\mathcal{O}(l_k^2n+f(l_k,n)+l_kn+n^2),$$ where 
$f(l_k,l_{max},m,n),l_kn,n^2)$ represent the complexities of the three inner iteration algorithms.

It is important to note that this time complexity only considers the main computational operations and not other factors (such as matrix storage and data transfer). In practical applications, these factors need to be considered as well.
From the analysis of the complexity above, it is evident that the algorithm can reduce complexity to some extent, which is one of its advantages.

\section{Numerical Example}

Below is an example of the numerical values of the algorithm, unless otherwise specified, the test matrices are all from the matrix market. We mark the following symbols.
\begin{itemize}
\item no-pre: iteration without preprocessing;

\item pre-1: Kacamarz inner iteration plus BA-GMRES outer iteration with a step size of 1;

\item pre-adapt: Kacamarz in-iteration plus BA-GMRES out-of-the-BA iteration and adaptive step;

 \item pre-adapt-r: Kacamarz in-circuit plus BA-GMRES out-of-the-BOX iteration and adaptive step size and random row selection;

\item ADI-pre: ADI approach as internal iteration, BA-GMRES as external iteration; 

\item PCG-pre: PCG method as internal iteration, BA-GMRES as external iteration; 

\item rpcg-pre: rpcg method as internal iteration, BA-GMRES as external iteration;

\item -: If we set the number of iterations to 300 times, that is, the error does not converge to $10^{-6}$ within 300 times, then the iteration fails.
\end{itemize}
\subsection{Numerical Example 1}
The test matrix for the numerical example is $dwt\_198$ and $can\_\_229$. The GMRES method without pretreatment is compared with the Kaczmarz-BA-GMRES and Kaczmarz method with step 1, Kaczmarz-BA-GMRES and Kaczmarz method with adaptive step size, The row selection method of Kaczmarz-BA-GMRES and Kaczmarz method is random row selection. The results obtained are as follows.

\begin{center}
	\begin{tabular}{c|c|c|c|c}
		\hline
		$dwt\_198$	& no-pre &pre-1&pre-adapt&pre-adapt-r  \\
		\hline
		error	&8.236984e-07&-&9.977501e-07&9.657820e-07
		\\
		\hline
	 iteration	&86  &300&79 &86\\
		\hline
		time	& 0.1123745& 30.105613&0.163483&1.966090 \\
		\hline
	\end{tabular}
\end{center}
In the above numerical example, we can see that pre-adapt has fewer iterations than no-pre, and the other two preprocessing methods perform slightly worse than no-pre methods. Therefore, specific problems need to be analyzed in the pretreatment.

The following figure is a line chart of error comparison for the four methods of $dwt\_198$.

\begin{figure}[H]
	\centering
	\includegraphics[width=0.7\linewidth]{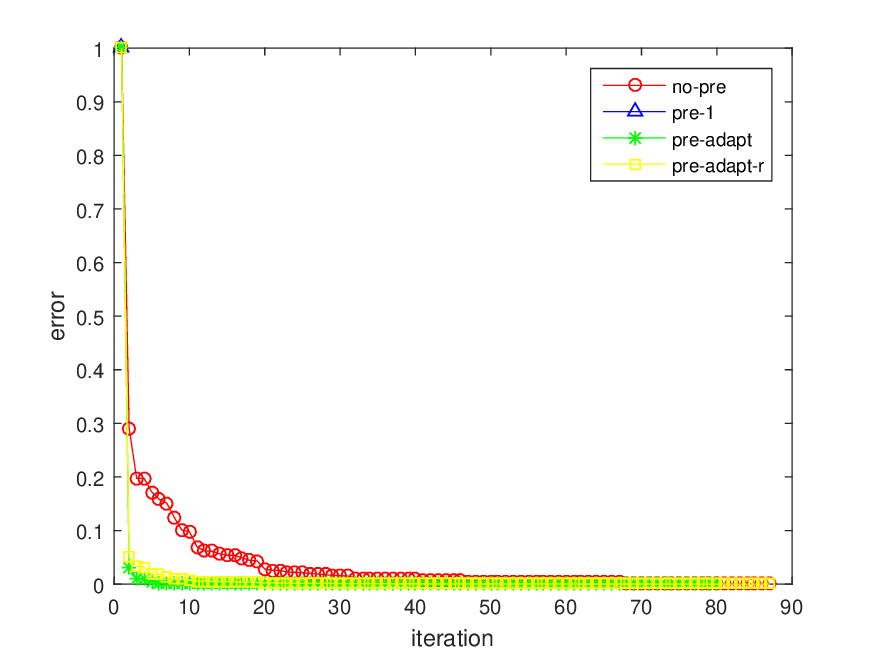}
	\caption{Error comparison plot of the sparse matrix 'dwt\_198'}
	%\label{fig:dwt198}
\end{figure}

\begin{center}
	\begin{tabular}{c|c|c|c|c}
		\hline
		$can\_\_229$	& no-pre &pre-1 &pre-adapt&pre-adapt-r  \\
		\hline
		error	& 4.190692e-07 & 6.507735e-11&6.282670e-08&2.744568e-10\\
		\hline
		iteration	&35&1& 36&2
		\\
		\hline
		time	&0.007801& 12.845967&0.182913&467.691122 \\
		\hline
	\end{tabular}
\end{center}

In the above numerical example, we can see that compared to no-pre, pre-1 and pre-adapt-r perform better in terms of number of iterations, and the other two preprocessing methods are not so good.

The following figure is a line chart of error comparison for the four methods of $can\_\_229$.

\begin{figure}[H]
	\centering
	\includegraphics[width=0.7\linewidth]{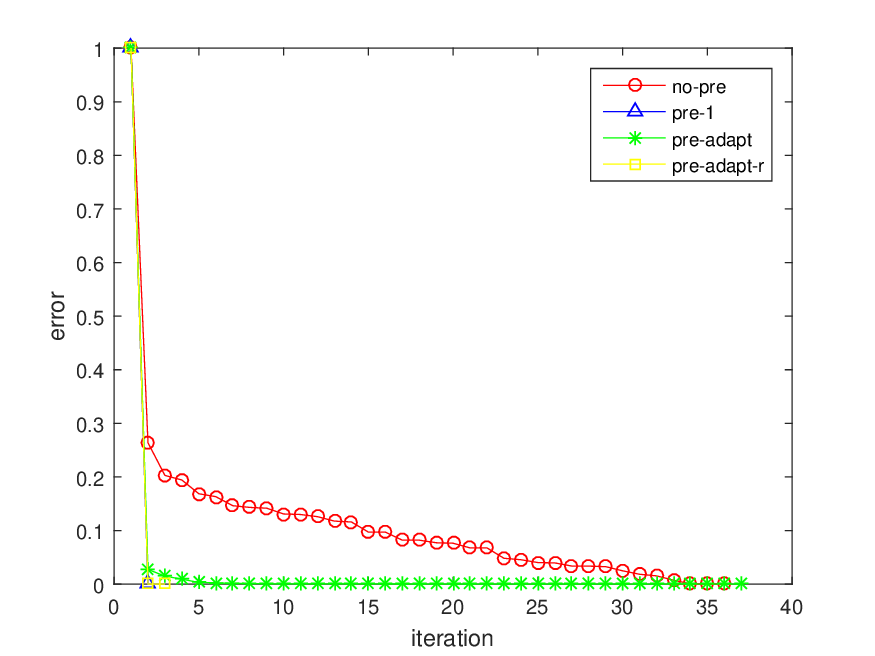}
	\caption{Error comparison plot of the sparse matrix can\_\_229}
	\label{fig:can229}
\end{figure}
\subsection{Numerical Example 2}

We compared four methods: GMRES without preprocessing, Kaczmarz-BA-GMRES with Kaczmarz method using step size 1, Kaczmarz-BA-GMRES with Kaczmarz method using adaptive step size, and the basic ADI iteration as the inner iteration and BA-GMRES as the outer iteration. The test matrix used was 494\_bus, and the results obtained are as follows.

\begin{center}
	\begin{tabular}{c|c|c|c|c}
		\hline
		&no-pre&pre-1 &pre-adapt&ADI-pre  \\
		\hline
		error	 &9.189161e-07&9.870971e-07&9.330144e-07&8.914631e-07 \\
		\hline
		iteration&	281&279& 288&252
		\\ 
		\hline
		time&0.238567	&0.254157 &14.141891&0.233930  \\
		\hline
	\end{tabular}
\end{center}
In the above numerical example, ADI-pre outperforms no-pre in terms of iteration time and number of iterations.

\begin{figure}[H]
	\centering
	\includegraphics[width=0.7\linewidth]{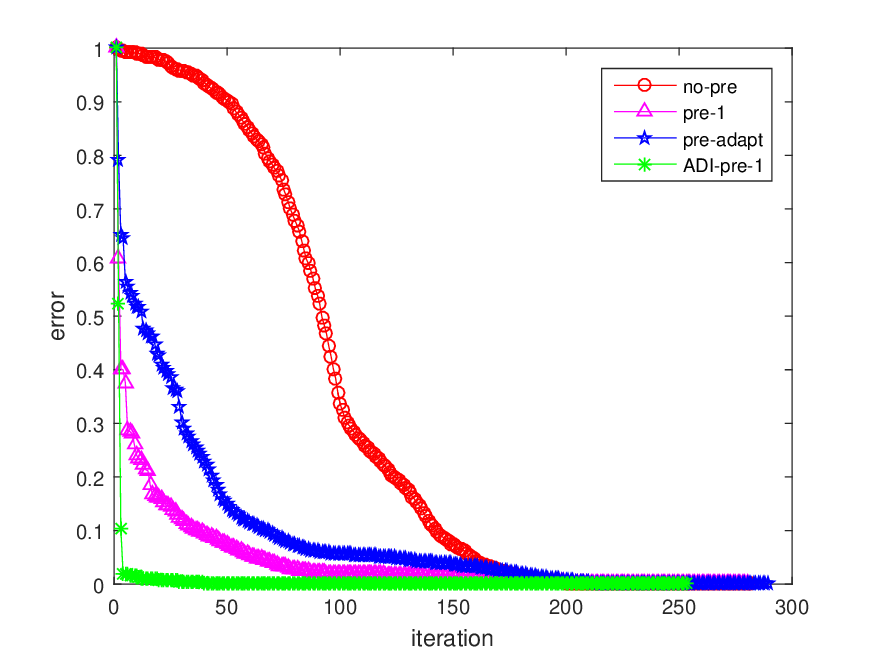}
	\caption{Error comparison plot of the sparse matrix 494\_bus}
	\label{fig:adipre-1pre-ada}
\end{figure}

The test matrix of the numerical example is bcspwr02. The GMRES method without preprocessing is compared with the basic ADI iteration format as the inner iteration and BA-GMRES as the outer iteration. The results obtained are as follows.

\begin{center}
	\begin{tabular}{c|c|c|c|c}
		\hline
		&no-pre&pre-1 &pre-adapt&ADI-pre  \\
		\hline
		error	 &6.660309e-17&2.159542e-07&3.652257e-08&2.864237e-16\\
		\hline
		iteration&	86&1& 2&42
		\\ 
		\hline
		time&0.019226	&0.269463&5.383092&0.021467  \\
		\hline
	\end{tabular}
\end{center}

From the above numerical example, we can see that ADI-pre is significantly better than no-pre in terms of number of iterations and iteration time.

\begin{figure}[H]
	\centering	\includegraphics[ width=0.7\linewidth]{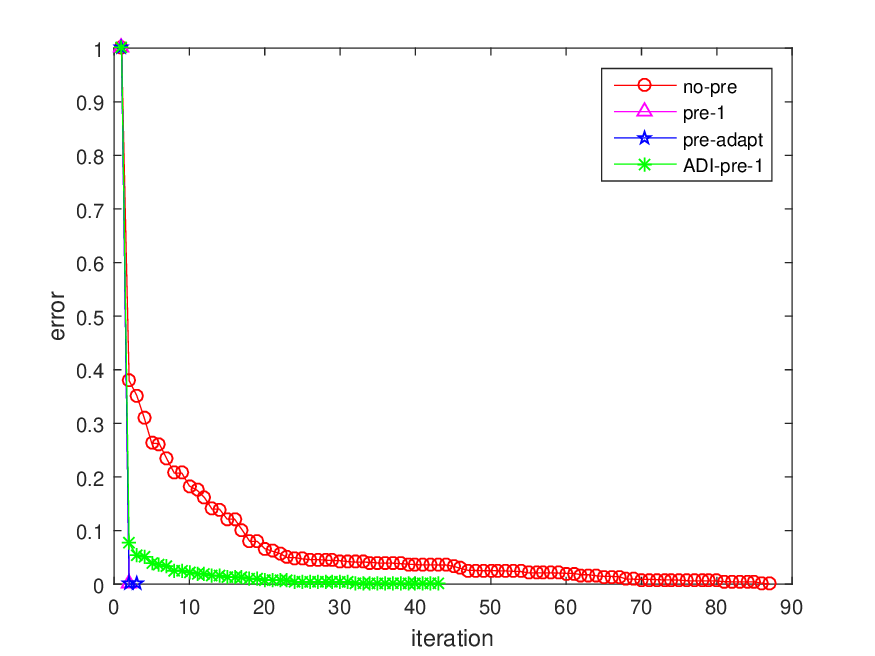}
	\caption{Error comparison plot of the sparse matrix bcspwr02}
	\label{fig:}
\end{figure}

\subsection{Numeric Example 3}\label{ex3}

We compare the preconditioned conjugate gradient method (PCG) as the inner iteration and BA-GMRES as the outer iteration with the GMRES method without pretreatment, and obtain the following numerical results. The test matrices used are $nos1$ and $494\_bus$.

\begin{center}
	\begin{tabular}{c|c|c}
		\hline
		$nos1$	&no-pre &PCG-pre  \\
		\hline
		error	 &6.803559e-09
		&9.844183e-07 \\
		\hline
		iteration	&198& 101
		\\
		\hline
		time	&0.150153 &0.113811\\
		\hline
	\end{tabular}
\end{center}

\begin{center}
	\begin{tabular}{c|c|c}
		\hline
		$494\_bus$	&no-pre &PCG-pre  \\
		\hline
		error	 &9.189161e-07
		&8.914631e-07 \\
		\hline
		iteration	&281& 252
		\\
		\hline
		time	&0.489451 &0.269410 \\
		\hline
	\end{tabular}
\end{center}

From the above two numerical examples, we can see that PCG-pre is better than no-pre in terms of iteration time and number of iterations.

We use the RPCG method as the internal iteration and BA-GMRES as the outer iteration to obtain the following results. The matrices we choose are the random matrix and the tridiagonal matrix $$\begin{bmatrix}
	10&2 &0&\cdots&0\\
	2&10&2&\cdots&0\\
	0&2&10&\cdots&0\\
	\vdots&\vdots&\vdots&&\vdots\\
	0&0&0&\cdots&10\\
\end{bmatrix}.$$
\begin{center}
	\begin{tabular}{c|c|c|c|c}
		\hline
		Tridiagonal matrix	& no-pre &pre-1&pre-adapt&rpcg-pre  \\
		\hline
		error	&2.864893e-07&2.864928e-07&2.864893e-07&5.503294e-07
		\\
		\hline
		iteration	&8  &8&8 &7\\
		\hline
		time	&0.002582& 0.078088&65.722983&0.019600  \\
		\hline
	\end{tabular}
\end{center}

In the above numerical example, it can be seen that the rpcg-pre method is better than no-pre in terms of number of iterations and iteration time.

\begin{figure}[H]
	\centering
	\includegraphics[width=0.7\linewidth]{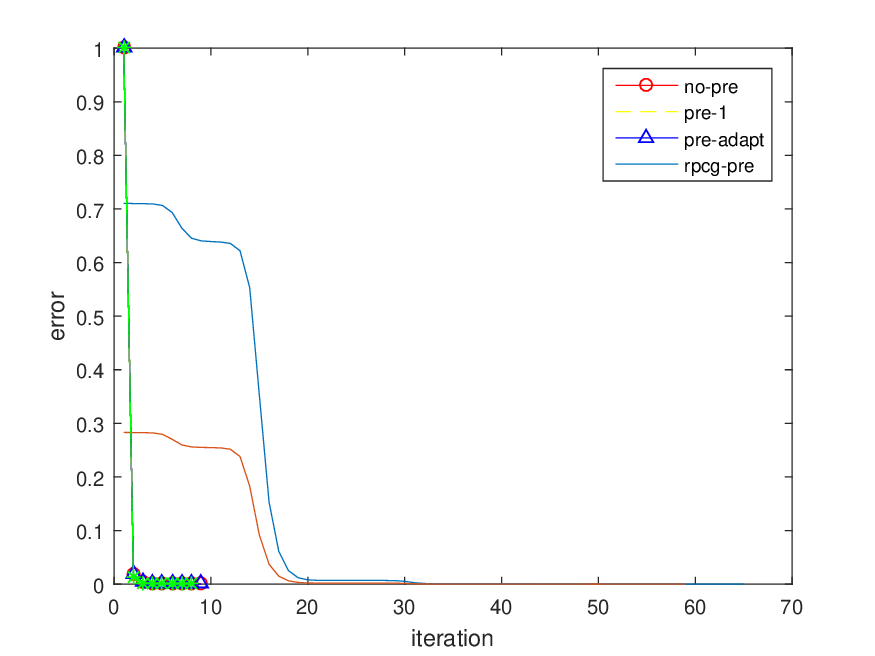}
	\caption{ Error comparison plot of the sparse matrix three diagonal matrices}
	\label{fig:rpcg-pre}
\end{figure}

\section{Conclusion}

In this paper, a method for solving a system of linear equations is studied, using three different internal iterative methods to calculate the vectors required for the Krylov subspace, and then using BA-GMRES to calculate the solution of the equation system. We give three numerical examples to test the performance of each of the three methods. It is concluded that the NM-BA-GMERES method after different pretreatment is superior to the GMRES method in terms of convergence rate.

\section{Statements and Declarations}
No potential conflict of interest was reported by the authors.
%\bibliographystyle{plain}
%\bibliography{reference.bib}

	\end{document}